\theoremstyle{plain} \newtheorem{theorem}{Theorem}  \newtheorem{lemma}{Lemma} \newtheorem*{corollary}{Corollary}
\theoremstyle{definition} \newtheorem{definition}{Definition}   
\theoremstyle{remark}   
\begin{document}
\title{A Linking Number Definition of the Affine Index Polynomial and Applications}
\author{Lena C. Folwaczny and Louis H. Kauffman}
\date{}
\maketitle

\begin{abstract}
This paper gives an alternate definition of the Affine Index Polynomial (called the Wriggle Polynomial) using virtual linking numbers and explores applications of this polynomial.   In particular, it proves the Cosmetic Crossing Change Conjecture for odd virtual knots and pure virtual knots.  It also demonstrates that the polynomial can detect mutations by positive rotation and proves it cannot detect mutations by positive reflection.  Finally it exhibits a pair of mutant knots that can be distinguished by a Type 2 Vassiliev Invariant coming from the polynomial.
\end{abstract}

\section{Introduction}
This paper gives a linking number definition of the Affine Index Polynomial as described by Kauffman \cite{AIP}.  The polynomial is calculated by assigning a weight at each crossing of the diagram.  In virtual knot theory, a link of two components has \textit{two} linking numbers (as we shall explain below).  The \textit{wriggle number} of a virtual link is defined to be the difference between its two virtual linking numbers.  We denote the wriggle number of a link L as $W(L)$.  In this paper, for each crossing c in a knot diagram we smooth c in an oriented manner to obtain a link, $L_c$.  We calculate the wriggle number of this link, $W(L_c)$, and associate it to the crossing as a weight.  In the knot diagram, the weights assiciated to each crossing are then extended to a polynomial invariant, the Wriggle Polynomial, by:\\
\centerline{$W_K(t) = \displaystyle\sum_{c \in C}sign(c) t^{W(L_c)} - writhe(K)$}
 $L_c$ is the link obtained after an oriented smoothing of crossing c, $W(L_c)$ is the wriggle number of $L_c$,  and sign(c) is the sign of crossing c.  We prove this linking number definition is equal to the original definition of the Affine Index Polynomial.  Our new linking number definition gives us additional insight into the invariant and an alternative method of calculation.  The polynomial is then used to prove the Crossing Change Conjecture (see section 4) for certain classes of virtual knots.  We also demonstrate that the polynomial is successful in detecting some classes of mutant virtual knots, and prove it cannot detect others.  Certain proofs are made clearer when using a particular definition of the polynomial, so different reformulations of the definition are used throughout the proofs in the paper.\\

The paper is organized as follows.  In section 2 we review Virtual Knot Theory and other background needed for the paper.  In section 3 we introduce the Wriggle Polynomial and prove its invariance under Reidemeister moves.  In section 4 we review the Crossing Change Conjecture and give a solution for certain classes of virtual knots.  We remark that section 3 and section 4 work exclusively with the linking number definition of the Wriggle Polynomial.  In section 5 we review the definition of the Affine Index Polynomial and show the Wriggle Polynomial is equal to the Affine Index Polynomial.  After section 5, we allow ourselves to use both definitions in calculations and proofs.  In section 6 we show the polynomial can distinguish families of mutant knots of a certain type, and prove it cannot detect mutations of another type.\\

\section{Background}
\subsection{Virtual Knot Theory}
Recall that a virtual knot is a 4-valent planar graph endowed with extra information at each crossing.  We can describe each crossing as positive, negative, or virtual, as shown in Figure 1.  We can also describe knots via Gauss Diagrams, as shown in Figure 2.\\

\begin{figure}[h]
\begin{center}
\includegraphics[scale = 0.7]{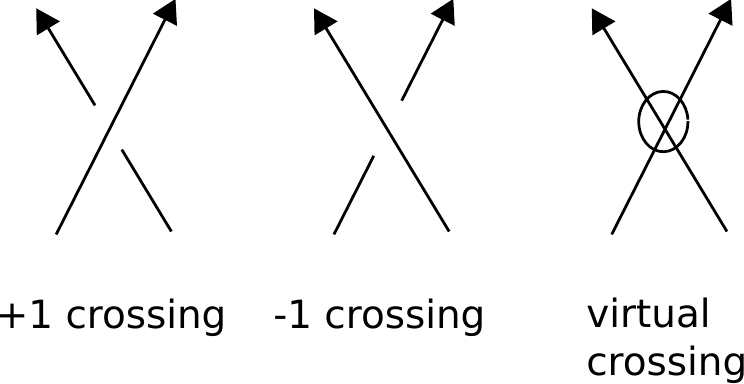}
\end{center}
\caption{The 3 types of crossings in a virtual knot diagram}
\end{figure}

\begin{figure}[h]
\begin{center}
\includegraphics[scale=0.6]{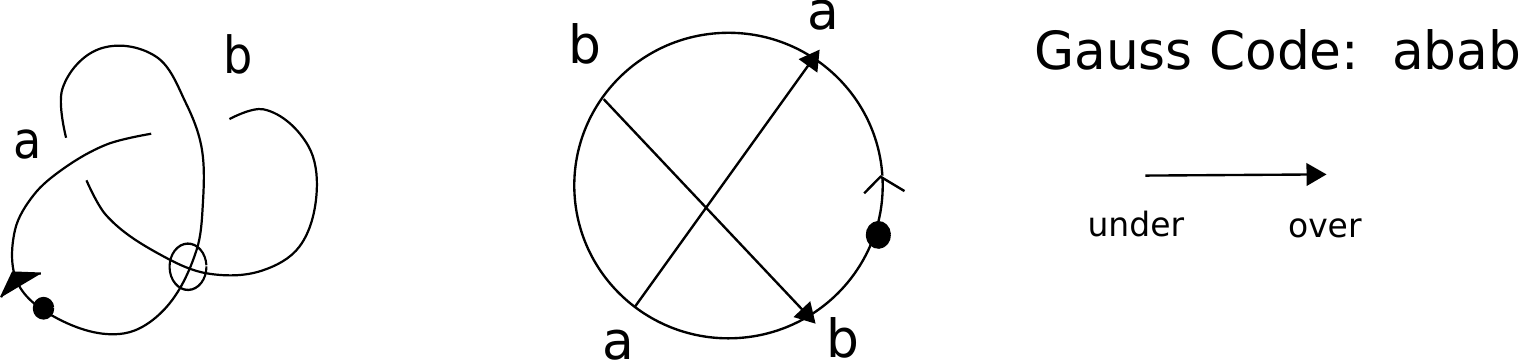}
\end{center}
\caption{The Gauss diagram and Gauss code for the virtualized trefoil}
\end{figure}

Virtual knots and links can be described topologically as embeddings of circles in thickened surfaces (of arbitrary genus) taken up to surface homeomorphisms and 1-handle stabilization.  Virtual crossings are an artifact of the planar representation.  See \cite{AIP} and \cite{IVKT} for more information about this point of view.  \\

Two virtual diagrams are equivalent if and only if there exists a sequence of classical and virtual Reidemeister moves connecting them.  We can describe these moves on planar diagrams or on the Gauss diagrams.  The Reideimester moves on planar diagrams are shown in Figure 3.  Virtual crossings do not appear on Gauss diagrams, so that in this view virtual knots are equivalence classes of Gauss diagrams up to Reidemeister moves.\\

\begin{figure}[h]
\begin{center}
\includegraphics[scale=0.8]{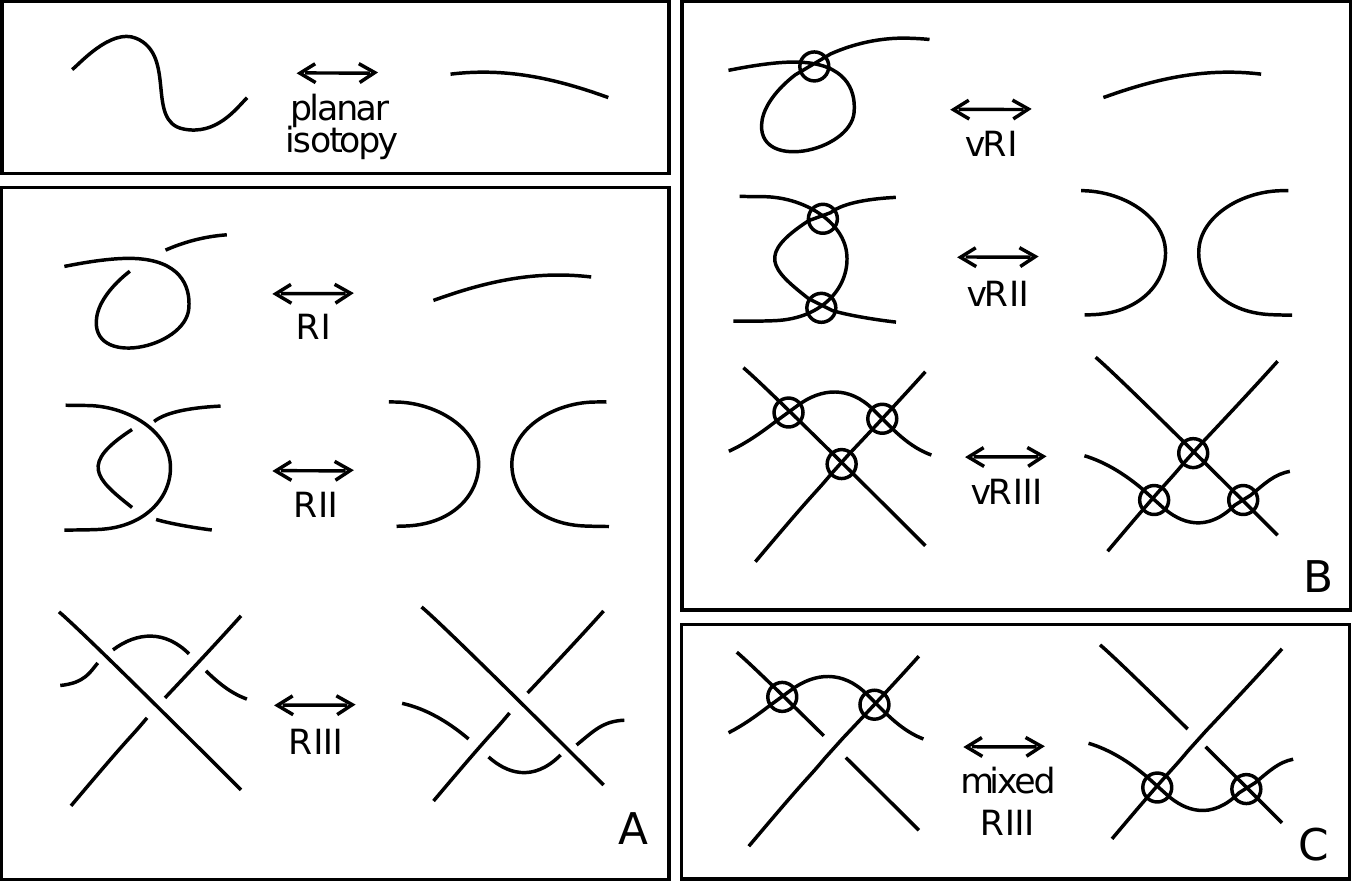}
\end{center}
\caption{Classical and Virtual Reidemeister Moves on Planar Diagrams}
\end{figure}

For an oriented knot diagram, each crossing can be smoothed in two possible ways - an oriented smoothing and an unoriented smoothing, as shown in Figure 4.\\
\begin{figure}[h]
\begin{center}
\includegraphics[scale = 0.7]{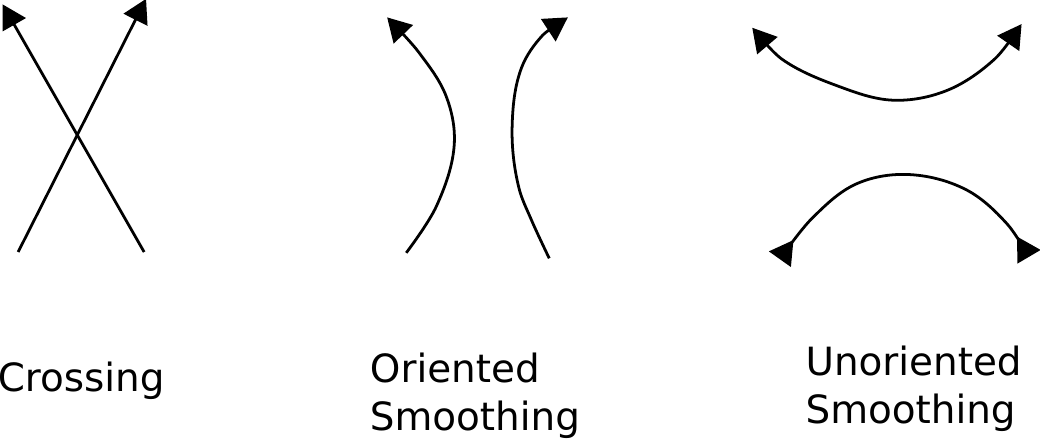}
\end{center}
\caption{An oriented and unoriented crossing smoothing}
\end{figure}

\begin{theorem}
An oriented smoothing of one crossing in a knot (virtual or classical) always gives an oriented link diagram with two components.
\end{theorem}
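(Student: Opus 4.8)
The plan is to treat the knot as a single oriented circle and to track exactly what the oriented smoothing does to the underlying one-manifold. First I would parametrize the knot diagram by a map from $S^1$, so that the diagram is the image of one circle. The chosen crossing $c$ is a transverse double point, so it has exactly two preimages $s, t \in S^1$ with $s \neq t$. These two points cut $S^1$ into two arcs: an arc $Q$ running forward from $s$ to $t$, and an arc $P$ running forward from $t$ back around to $s$. In the original diagram, $P$ and $Q$ are glued to one another at $c$ (the outgoing end of one visit feeds into the incoming end of the next), so that together they close up into the single circle of the knot.

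Next I would record precisely how the oriented smoothing reconnects the four local strand-ends at $c$, which consist of two incoming ends and two outgoing ends. There are two ways to resolve the crossing into a pair of non-crossing arcs, namely the two smoothings; exactly one of them connects each incoming end to an outgoing end, and that is the oriented smoothing, while the other joins incoming ends to incoming ends and thus reverses orientation, giving the unoriented smoothing. The key observation, which I would verify directly from the local picture, is that the oriented reconnection joins the two ends of $Q$ to each other and the two ends of $P$ to each other. Tracing the result is then immediate: following $Q$ from $s$ and applying the new connection at $t$ returns to the start of $Q$, so $Q$ closes into a circle, and likewise $P$ closes into a circle. Hence the smoothed diagram is exactly two disjoint circles, and since the oriented smoothing respects orientation at $c$ by construction, each circle inherits a consistent orientation, so the result is an oriented two-component link.

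The main obstacle is the bookkeeping in the step above: correctly matching the purely local notion of an oriented smoothing with its global effect on the arc structure of $S^1$, i.e.\ confirming that it is the reconnection closing up each of $P$ and $Q$ separately, rather than the unoriented reconnection, which would instead splice $P$ and $Q$ back into a single circle while reversing orientation along one of them. I expect to resolve this cleanly by invoking the orientation constraint at the crossing: requiring every incoming end to connect to an outgoing end excludes the orientation-reversing reconnection and pins down the oriented smoothing uniquely, forcing the two-component outcome.

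Finally, for the virtual case I would observe that the smoothing touches only the single classical crossing $c$; the virtual crossings are planar artifacts that are left untouched and contribute nothing to the component count. The underlying circle and the two preimages of $c$ behave exactly as before, so the oriented smoothing again produces precisely two oriented components, now forming a virtual link. This covers both the classical and the virtual statements of the theorem.
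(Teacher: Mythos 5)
Your argument is correct and is essentially the one the paper has in mind: the paper leaves the statement as an exercise with the hint ``consider the Gauss diagram,'' and your two preimages $s,t$ cutting $S^1$ into arcs $P$ and $Q$ are exactly the endpoints of the chord for $c$ on the Gauss diagram circle, with the oriented reconnection closing each arc into its own component. You have simply carried out in full the bookkeeping the paper omits, including the correct observation that virtual crossings are irrelevant to the component count.
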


\begin{proof} This is an easy excercise. Consider what happens to the Gauss diagram of a knot after smoothing one crossing in an oriented manner.
\end{proof}

\subsection{Crossing Parity}
Crossings in a knot diagram can be labeled as even or odd by the definitions given below.
\begin{definition}
 A crossing is an \underline{\textit{even crossing}} if there are an even number of terms in between the two appearances of the crossing in the Gauss Code.  If there are an odd number of terms, the crossing is an \underline{\textit{odd crossing}}.  
\end{definition}

\begin{figure}[h]
\begin{center}
\includegraphics[scale = 0.5]{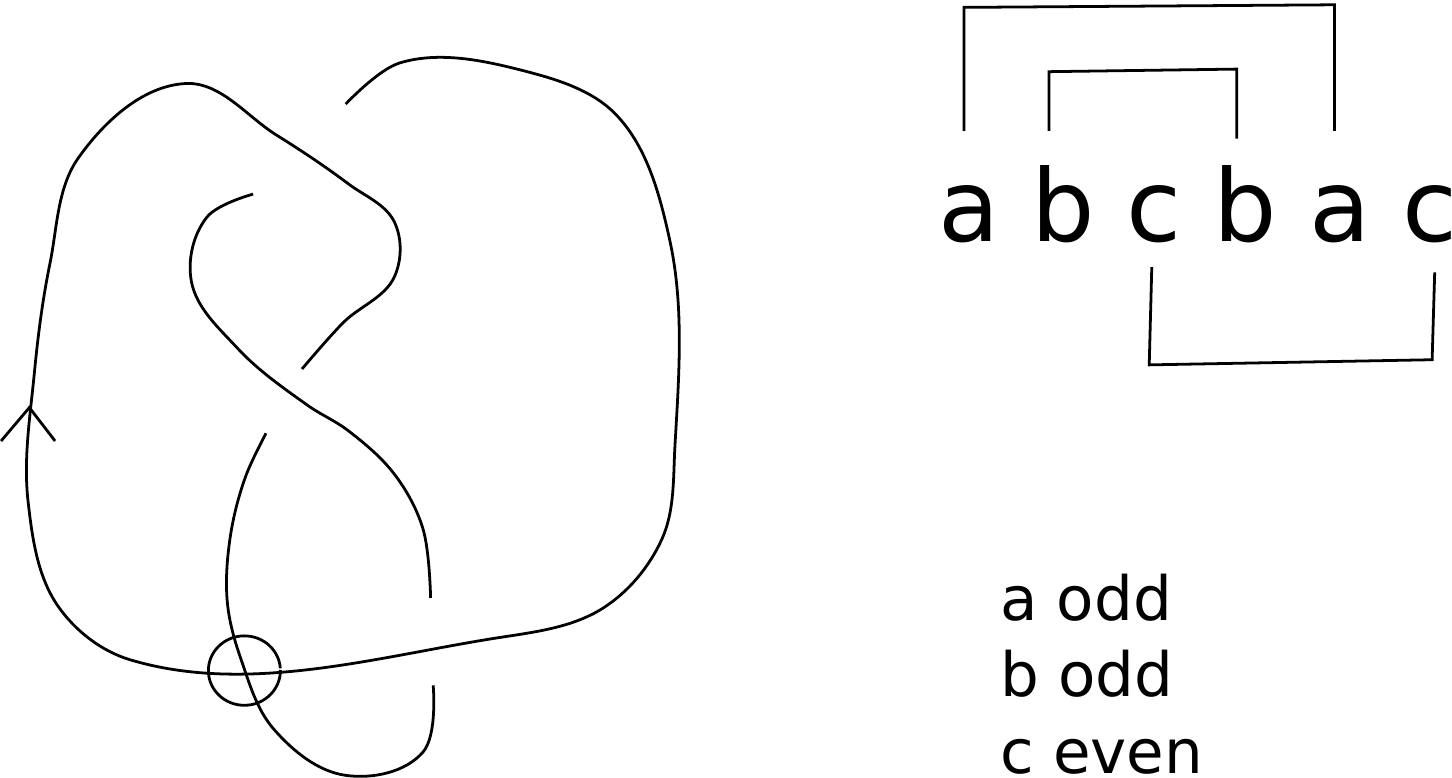}
\end{center}
\caption{The virtual figure eight has even and odd crossings}
\end{figure}

In Figure 5, crossings a and b are odd, and crossing c is even. In classical knots, all crossings are even, but in virtual knots crossings can be even or odd.  One interesting feature about odd crossings is that the \textit{odd writhe}, J(K), \cite{SL} is an invariant of virtual knots, while the writhe is not an invariant of classical knots (it is changed by a Reideimeister 1 move in the diagram). 

\[ 
J(K) = \displaystyle\sum_{c_i \in Odd(C)}w(c_i)
\]

The formula for the odd writhe sums over all crossings in Odd(C), the set of all odd crossings in a knot.  J(K) is the simplest example of an invariant for virtual knots that is obtained by only considering odd crossings in the calculation.  The odd writhe of the knot K in Figure 5 is +2, and its writhe is +1.  The fact that J(K) = +2 proves that K is inequivalent to its mirror image and that K is not classical.\\

\begin{definition}
We call a virtual knot an \underline{\textit{odd virtual}} if all of its crossings are odd.
\end{definition}

\begin{definition}
Let L be an ordered, classical, 2-component link.  Let C be the set of crossings between the 2 linked components (no self-crossings).  When traveling along the first component, \textit{Over(C)} (resp. \textit{Under(C)}) is the set of crossings from C we encounter as overcrossings (resp. undercrossings).  The following are 3 equivalent definitions of \underline{\textit{linking number}} in classical knot theory.\\
1) $lk(L) = \displaystyle\frac{1}{2} \big[$ (\# of positive crossings) - (\# of negative crossings) $\big]$\\
2) $lk(L) = \displaystyle\frac{1}{2} \sum_{c \in C}sign(c)$\\
3) $lk(L) = \displaystyle\sum_{c \in Over(C)}sign(c) = \sum_{c \in Under(C)}sign(c)$\\
\end{definition}

 In Definition 3, the equal sums in part 3 show that ordering the links makes no difference in the classical case when calculating linking number.  For virtual links this is not always true; the two sums in part 3 may be different.  The simplest example of this is the Virtual Hopf Link in Figure 6.  The ordering of the components makes a difference when calculating the sums in part 3 of Definition 3.  Thus in virtual knots, we have two possible linking numbers for an ordered 2-component link.\\

\begin{figure}[h]
\begin{center}
\includegraphics[scale=0.5]{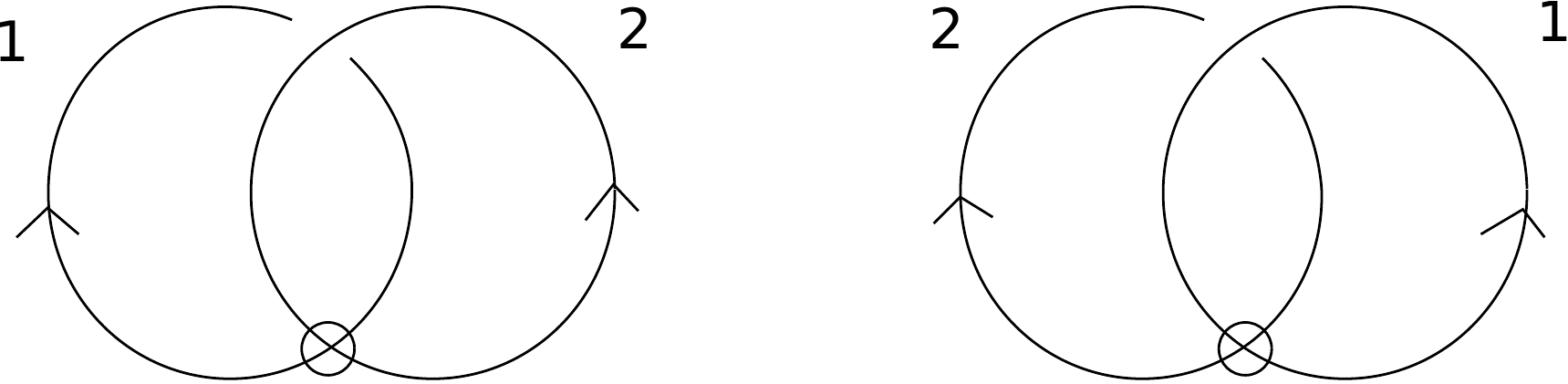}
\end{center}
\caption{Two different orderings of link components in the Virtual Hopf Link}
\end{figure}

\begin{definition}
For an ordered virtual 2-component link we define\\
\begin{center}
\textit{Over linking number} = $lk_O(L) = \displaystyle\sum_{c \in Over(C)}sign(c)$\\
\textit{Under linking number} = $lk_U(L) = \displaystyle\sum_{c \in Under(C)}sign(c)$\\
\end{center}
\end{definition}

Note that Over(C) and Under(C) are defined with respect to the ordering of the link (i.e. from the persepctive of traveling along the first component).  For example, in Figure 6 the Virtual Hopf Link on the left, L, has $lk_O(L) = 0$ and $lk_U(L) = +1$.

\section{The Wriggle Polynomial}

\begin{definition} The \underline{\textit{wriggle number}} for an ordered 2-component oriented link is the difference between the 2 virtual linking numbers.\\
\begin{center}
$W(L) =\displaystyle\sum_{c \in Over(C)} sign(c) - \displaystyle\sum_{c \in Under(C)}sign(c) = lk_O(L) - lk_U(L)$
\end{center}
\end{definition}

The set Over (resp. Under) is the set of crossings between the 2 linked components that we go over (resp. under) while we travel along the first component of the link diagram.  We sum the signs of the crossings in these sets.\\

\begin{theorem}
Wriggle number is a non-trivial invariant of virtual isotopy for ordered links (isotopy where we label the components as first and second) and is trivially 0 for all classical links.\\
\end{theorem}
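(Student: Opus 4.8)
The plan is to establish three separate assertions packaged into the one statement: that $W(L)$ is unchanged under every classical and virtual Reidemeister move performed on an ordered diagram, that it vanishes identically on classical links, and that it is not identically zero. The latter two are essentially immediate from the definitions already in hand, so the real content lies in verifying invariance. I would organize the argument as a move-by-move check, exploiting throughout the fact that the crossing set $C$ records only the \emph{mixed} crossings (those between the first and second components) and ignores all self-crossings and all virtual crossings. Since no Reidemeister move permutes or relabels the components, the chosen ordering is automatically preserved, so invariance ``for ordered links'' will follow from ordinary move-invariance.

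For invariance I would treat the moves in turn. A Reidemeister 1 move creates or destroys a self-crossing of a single component, so it never alters $C$; both $lk_O$ and $lk_U$ are untouched and $W$ is unchanged. A Reidemeister 2 move either involves two arcs of the same component --- again a non-issue, since the two new crossings are self-crossings --- or two arcs of different components, in which case it introduces a pair of mixed crossings. The key observation is that, viewed from the first component, these two crossings are of the \emph{same} type (both overcrossings or both undercrossings) yet carry opposite signs; hence they contribute to the same one of the two sums and cancel there, leaving $W$ fixed. A Reidemeister 3 move slides a strand across a crossing without changing, for any pair of strands, which strand is over or the sign of any crossing, so each of $lk_O$ and $lk_U$ is preserved term by term. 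Finally, every virtual Reidemeister move (including the detour move) only rearranges virtual crossings and transports strands through them, leaving the real crossings and their over/under/sign data --- and therefore $C$, $lk_O$, and $lk_U$ --- untouched.

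The remaining two claims follow quickly. For a classical link, part 3 of Definition 3 asserts that the over- and undercrossing sums agree, i.e. $lk_O(L) = lk_U(L)$, whence $W(L) = lk_O(L) - lk_U(L) = 0$; this is precisely the classical well-definedness of the linking number independent of which strand is read as the overstrand. Non-triviality is witnessed by the Virtual Hopf Link $L$ of Figure 6, for which the excerpt has already recorded $lk_O(L) = 0$ and $lk_U(L) = +1$, giving $W(L) = -1 \neq 0$ and showing simultaneously that the invariant is not always zero and that it genuinely detects virtuality.

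The main obstacle I anticipate is the Reidemeister 2 bookkeeping: one must confirm carefully, with attention to orientations, that the two mixed crossings created by the move really are of a single type relative to the first component and really do carry opposite signs, so that cancellation occurs \emph{within} one sum rather than a spurious cancellation \emph{across} the two sums (which would show invariance of $lk_O - lk_U$ only accidentally, and would not survive in the per-sum analysis needed elsewhere). A clean way to make this airtight is to translate each move into its effect on the Gauss diagram, where over/under and sign are encoded by arrow direction and decoration, and check the cancellation there; this also dovetails with the Gauss-diagrammatic viewpoint the paper has been emphasizing.
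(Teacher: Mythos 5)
Your proposal is correct, and it follows the only natural route here: a direct move-by-move verification from the definition, which is exactly what the paper's one-line proof (``clear from the definition of wriggle number'') implicitly relies on. You have simply supplied the details the paper omits, including the key Reidemeister 2 observation that the two mixed crossings cancel within a single one of the sums $lk_O$ or $lk_U$, and the correct witnesses for vanishing on classical links (part 3 of Definition 3) and for non-triviality (the Virtual Hopf Link).
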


\begin{proof} This is clear from the definition of wriggle number.
\end{proof}

\begin{figure}[h]
\begin{center}
\includegraphics[scale = 0.6]{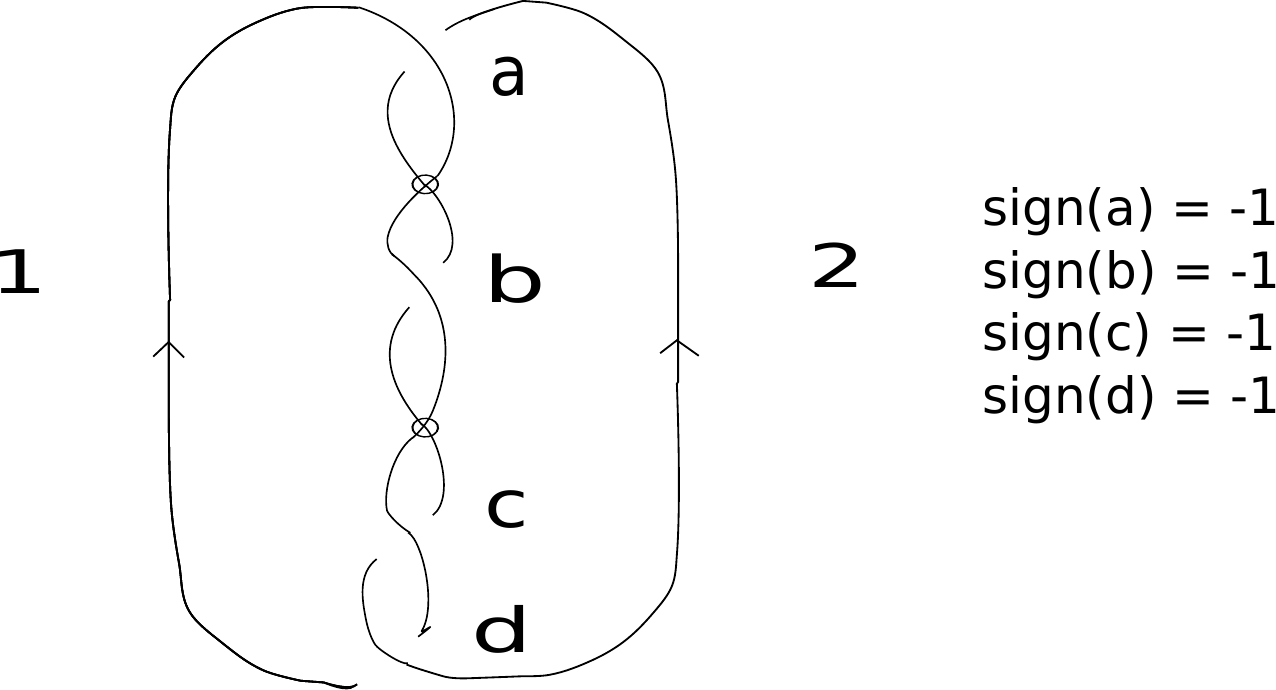}
\end{center}
\caption{Calculating the wriggle number of a virtual link}
\end{figure}

To calculate the wriggle number of the ordered link L in Figure 7, we begin by traveling along the component labeled 1.  As we travel along component 1 crossings a,b,c $\in$ Over(L) and d $\in$ Under(L).  Therefore:\\
\centerline{$W(L) = \big[ (-1) + (-1) + (-1) \big] - \big[ (-1) \big]$ = -2}\\
Notice that if we switch the order of the two components in the link, the sign of each crossing remains the same, but the decision of whether a crossing is ``over" or ``under" is reversed - going over a crossing as you travel along one component results in going under that crossing as you travel along the other component.  The wriggle number of the link in Figure 7 is +2 if we switch the order of the components.\\

\begin{lemma}
Let L be an ordered 2-component oriented link.  Switch the order of the components and call the resulting link $\hat{L}$. Then W(L) = - W($\hat{L}$)\\
\end{lemma}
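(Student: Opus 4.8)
The plan is to exploit the observation recorded in the paragraph immediately preceding the lemma: reordering the two components leaves the sign of every inter-component crossing unchanged, while it exactly interchanges the roles of ``over'' and ``under.'' Concretely, the sign of a crossing is computed from the local orientations of the two strands together with the over/under data, none of which depends on which component we have chosen to call ``first''; relabeling the components is therefore sign-preserving. The entire argument then reduces to tracking how the sets $Over$ and $Under$ transform under the swap.

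First I would make precise the set-level statement. At any crossing $c$ between the two components, exactly one strand is the overstrand and one is the understrand. If, while traveling along the first component of $L$, we pass over $c$ (so $c \in Over(C)$ for $L$), then the first-component strand is the overstrand, hence the second-component strand is the understrand; consequently, when we travel along the new first component of $\hat{L}$ (namely the old second component), we pass \emph{under} $c$. This shows $Over(L)$ and $Under(\hat{L})$ consist of exactly the same crossings, and symmetrically $Under(L) = Over(\hat{L})$ as sets.

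With these identifications in hand, I would substitute directly into the definition of wriggle number. Since $sign(c)$ is unchanged by the reordering, we obtain
\[
W(\hat{L}) \;=\; \sum_{c \in Over(\hat{L})} sign(c) \;-\; \sum_{c \in Under(\hat{L})} sign(c) \;=\; \sum_{c \in Under(L)} sign(c) \;-\; \sum_{c \in Over(L)} sign(c) \;=\; -\,W(L),
\]
which is the claim. Equivalently, in the $lk_O/lk_U$ notation, the swap sends $lk_O(L) \mapsto lk_U(\hat{L})$ and $lk_U(L) \mapsto lk_O(\hat{L})$, so $W = lk_O - lk_U$ simply negates.

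There is no genuine obstacle here; the only point requiring care—rather than difficulty—is justifying that the crossing sign is independent of the component ordering. This is immediate once one recalls that the sign convention uses only the two strand orientations and the over/under relation at the crossing, all of which are intrinsic to the diagram and untouched by relabeling which component is traversed first. Everything else is a direct bookkeeping substitution into Definition of the wriggle number.
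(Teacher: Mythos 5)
Your proposal is correct and follows essentially the same route as the paper's proof: observe that the crossing signs are unchanged by the relabeling, that the sets $Over$ and $Under$ are exactly interchanged because traveling along the other component swaps the over/under role at each inter-component crossing, and then substitute into the definition $W = lk_O - lk_U$ to get the sign flip. Your write-up is in fact slightly more explicit than the paper's about why the sign is independent of the ordering, but there is no substantive difference in approach.
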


\begin{proof}  Let C be the set of crossings in L and $\hat{L}$ which are ``linking crossings" - crossings between the 2 components as opposed to within the same component. This set is the same for L and $\hat{L}$ and these are the only crossings which contribute to calculating W(L) and W($\hat{L}$).  Consider what happens at such a crossing when we switch the order of the components of the link, as illustrated in Figure 8.\\

\begin{figure}[h]
\begin{center}
\includegraphics[scale = 0.5]{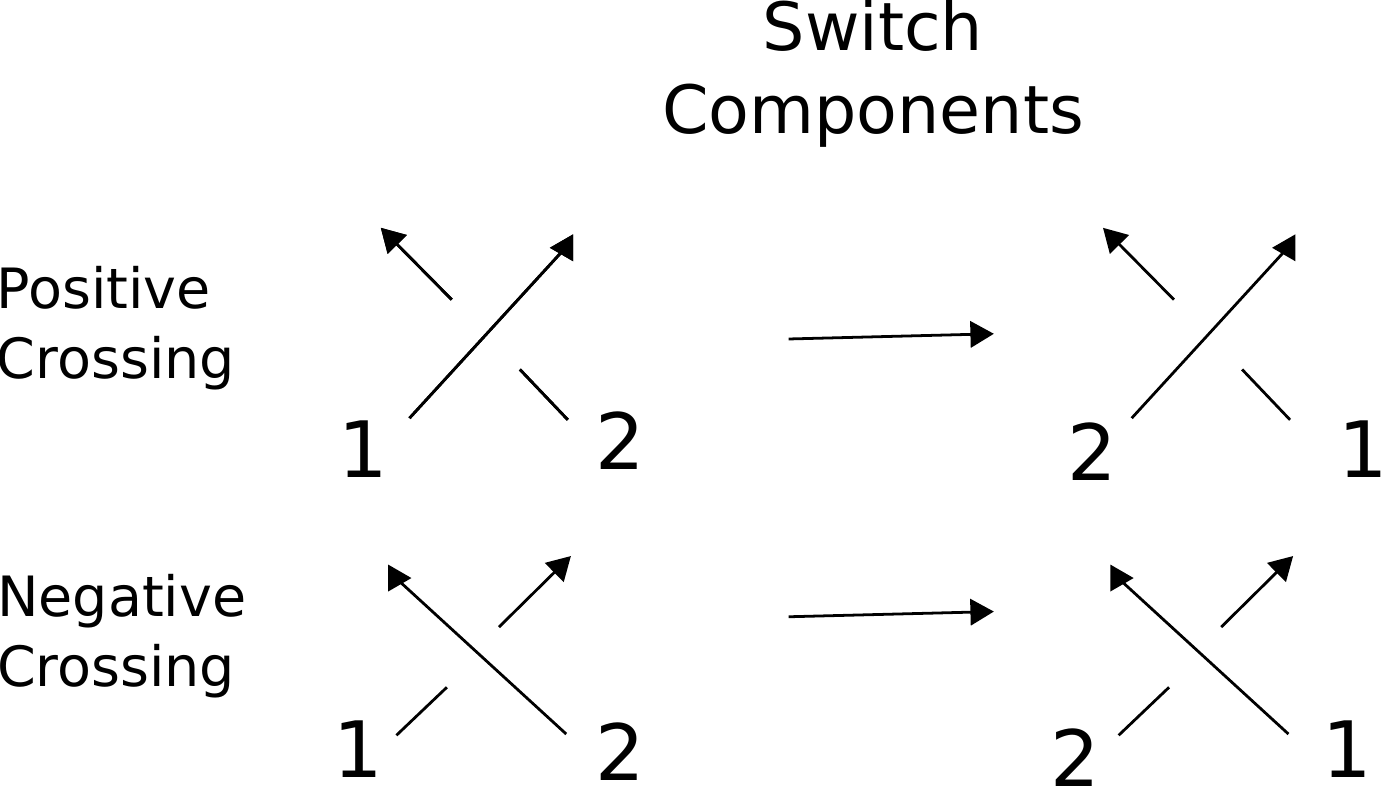}
\end{center}
\caption{Switching the order of the link components}
\end{figure}

The sign of each crossing remains the same.  After switching the order of the components, the difference in the calculation of W(L) and W($\hat{L}$) is that one travels along the other strand for the calculation.  As a result, traveling through an overcrossing while calculating W(L) results in traveling through an undercrossing when calculating W($\hat{L}$) (and vice versa).  Thus W(L) = -W($\hat{L}$)
\end{proof}

\begin{corollary}
$|W(L)|$ is a virtual link invariant.
\end{corollary}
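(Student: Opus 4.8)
The plan is to combine the preceding Lemma with the ordered-link invariance already established in Theorem 2. The central observation is that a $2$-component link can be equipped with exactly two orderings, and these two orderings produce precisely the pair $L$ and $\hat{L}$ related by the Lemma. Since the Lemma gives $W(L) = -W(\hat{L})$, passing to absolute values yields $|W(L)| = |W(\hat{L})|$, so $|W(\cdot)|$ does not depend on which component is called ``first.''

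First I would recall that, for a fixed choice of ordering, $W$ is already invariant under the classical and virtual Reidemeister moves by Theorem 2; thus $|W|$ inherits this diagrammatic invariance automatically. The only additional freedom left to rule out is the choice of labeling of the two components. To handle this cleanly, I would let $L$ and $L'$ be any two diagrams representing the same underlying \emph{unordered} virtual link, and fix an ordering on each. Any isotopy carrying one to the other either preserves the labels or interchanges them.

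In the label-preserving case, $L'$ is virtually isotopic to $L$ as ordered links, so $W(L') = W(L)$ and hence $|W(L')| = |W(L)|$. In the label-interchanging case, $L'$ is virtually isotopic to $\hat{L}$, so by Theorem 2 together with the Lemma we get $W(L') = W(\hat{L}) = -W(L)$, and again $|W(L')| = |W(L)|$. In either case the absolute values agree, which is exactly the assertion that $|W(L)|$ is an invariant of the unordered virtual link.

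I expect the only delicate point to be the bookkeeping in the last step: one must be certain that ``same unordered link'' really decomposes into just these two cases and that no further relabeling freedom is hiding. Since a $2$-component link has only the symmetric group $S_2$ acting on its components, and that group is generated by the single transposition realized by $L \mapsto \hat{L}$, the Lemma already accounts for all of this freedom, so no genuine obstacle remains.
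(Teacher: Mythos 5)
Your argument is correct and is exactly the one the paper intends: the corollary is stated immediately after Lemma 1 precisely because $W(L) = -W(\hat{L})$ combined with the ordered-link invariance of Theorem 2 kills the only remaining ambiguity, namely the $S_2$ choice of which component is first. The paper leaves this implicit, but your write-up fills in the same reasoning with no deviation.
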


We can obtain a collection of 2-component links from a knot by smoothing classical crossings in an oriented manner.  In Figure 9 we draw the two links we get by smoothing the two classical crossings of the virtualized trefoil. \\

\begin{figure}[h]
\begin{center}
\includegraphics[scale = 0.6]{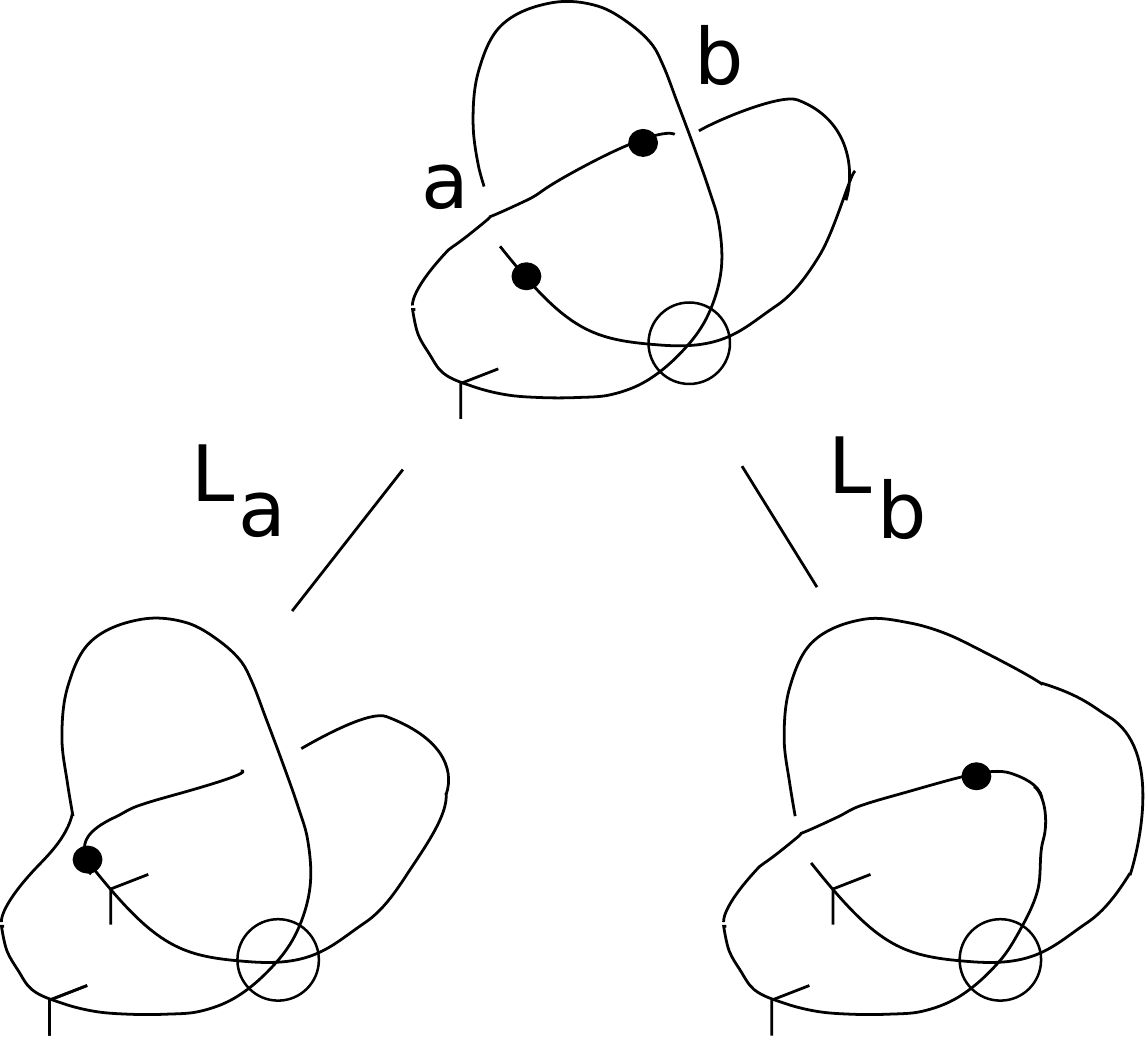}
\end{center}
\caption{Sub-linking diagrams of the virtual trefoil}
\end{figure}

Notice that in Figure 9 we place a dot by each crossing on the incoming under-strand.  Placing this dot in the diagram gives us a technique to consistently order the 2 link components of $L_c$.  After smoothing at that crossing, this dot will appear on one of the link components and we take the component with the dot to be the first component.  At each crossing c we can associate a weight, the wriggle number of the link $L_c$, and create a polynomial as described below. This polynomial, which very naturally can be called the Wriggle Polynomial, will later be shown to equal the Affine Index Polynomial.  To avoid confusion we will usually refer to it as the Affine Index Polynomial, but sometimes we will call it the Wriggle Polynomial.  We shall call it the Wriggle Polynomial in this paper until we prove it is equal to the Affine Index Polynomial.\\

\begin{definition}
The \underline{\textit{Wriggle Polynomial}} of a knot K is\\
\begin{center}
$W_K(t) = \displaystyle\sum_{c \in C}sign(c)t^{W(L_c)} - writhe (K)$
\end{center}
\end{definition}

The summation occurs over all crossings, $c \in C$ of the knot diagram. $L_c$ is the resulting ordered 2-component link that occurs after an oriented smoothing at crossing c.  The order of the linking components is defined by the dot convention explained above.  $W(L_c)$ is then the wriggle number of this sublink. As an example we calculate the Wriggle Polynomial for the virtualized trefoil in Figure 9.\\

\begin{center}
$W_K(t) = sign(a)t^{W(L_a)} + sign(b)t^{W(L_b)} - writhe(K)$\\
= $(+1)t^{-1} + (+1)t^{+1} - 2$\\
=$t^{-1} + t -2$\\
\end{center}

\begin{theorem}
The Wriggle Polynomial, $W_K(t)$, is a virtual knot invariant that is trivially zero on classical knots.
\end{theorem}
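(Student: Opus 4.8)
The plan is to prove the two assertions separately. That $W_K(t)$ vanishes on classical knots is immediate: if $K$ is classical then every oriented smoothing $L_c$ is a classical link, so $W(L_c)=0$ because wriggle number is trivially zero on classical links (Theorem 2). Hence $W_K(t)=\sum_{c\in C}sign(c)\,t^{0}-writhe(K)=\sum_{c\in C}sign(c)-writhe(K)=0$, since $\sum_{c\in C}sign(c)=writhe(K)$. For the invariance I would read the definition off the Gauss diagram. Smoothing $c$ cuts the Gauss circle at the two endpoints of the chord $c$ into two arcs; the dot convention declares the arc arriving at the under-endpoint of $c$ to be the first component. A chord $d$ is a linking crossing of $L_c$ exactly when it is linked with $c$ (its endpoints lie on different arcs), and then it contributes $+sign(d)$ if its over-endpoint lies on the first component and $-sign(d)$ otherwise, so that $W(L_c)=\sum_{d}\eta_c(d)\,sign(d)$ summed over chords $d$ linked with $c$, with $\eta_c(d)=\pm1$ as above. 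The virtual and mixed (detour) Reidemeister moves change neither $C$, $writhe(K)$, nor the linking pattern or over/under data among classical chords, and they preserve the incoming under-strand of each $c$; hence every $W(L_c)$, and therefore $W_K(t)$, is unchanged.

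For the classical $R1$ move, a kink introduces a chord $c_0$ whose two endpoints are adjacent on the Gauss circle, so no chord is linked with $c_0$ and $W(L_{c_0})=0$; the new monomial $sign(c_0)\,t^{0}=sign(c_0)$ is then cancelled exactly by the change $sign(c_0)$ in $writhe(K)$. Since $c_0$ is linked with no other chord, smoothing any other crossing leaves $c_0$ a self-crossing of one component, so no other $W(L_c)$ changes. For $R2$, the two new crossings $c_1,c_2$ have opposite signs, and a short Gauss-diagram computation shows that they are linked with the same outside chords carrying the same over/under data and that, after using $sign(c_1)=-sign(c_2)$, the contribution of $c_2$ to $W(L_{c_1})$ equals that of $c_1$ to $W(L_{c_2})$; therefore $W(L_{c_1})=W(L_{c_2})$ and the monomials $sign(c_1)t^{W(L_{c_1})}+sign(c_2)t^{W(L_{c_2})}$ cancel. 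The writhe is unchanged, and at any other crossing the symmetric opposite-signed pair $c_1,c_2$ contributes nothing to the wriggle number, so $W_K(t)$ is preserved.

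The essential case is the $R3$ move, which I expect to be the main obstacle. Using the $R2$ invariance just established, it suffices to verify a single oriented/over-under representative of $R3$, the other variants differing from it by $R2$ moves. In that representative the three participating crossings keep their signs, so it remains to show that each of $W(L_p),W(L_q),W(L_r)$ is preserved and that no crossing outside the triangle is affected. The second point is the easier one: an $R3$ move is local and does not change which arcs of the diagram lie to either side of a chord outside the triangle, so for such a chord $c$ the set of chords linked with $c$ and their over/under data are unchanged, whence $W(L_c)$ is unchanged. For the three triangle crossings I would track, through the Gauss-diagram formula $W(L_c)=\sum_d \eta_c(d)\,sign(d)$, how the local permutation of the three endpoints moved by $R3$ affects each of the three sums, and verify directly that each is preserved. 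Carrying out this verification while keeping consistent account of the dot convention and of the relative positions of the over- and under-endpoints of the three crossings is the technical heart of the proof; it is handled cleanly either by this explicit Gauss-diagram bookkeeping or by an exhaustive but routine check of the local planar picture.
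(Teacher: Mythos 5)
Your overall strategy is the same as the paper's: triviality on classical knots follows because every $W(L_c)$ vanishes and $\sum_c sign(c) = writhe(K)$; invariance is checked on a generating set of oriented classical moves, with the writhe normalization absorbing the $t^0$ term created by an RI kink; and the RII cancellation argument (the two new chords see the same outside chords, and the cross-contribution of each to the other's wriggle number agrees after using the opposite signs) is exactly the paper's Case I/Case II analysis. Your reformulation $W(L_c)=\sum_d \eta_c(d)\,sign(d)$ over chords $d$ linked with $c$ is legitimate and is in fact the identity the paper establishes later (in the proof that the Wriggle Polynomial equals the Affine Index Polynomial); using it up front is a clean way to organize the bookkeeping.

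The genuine gap is the RIII case, which you yourself identify as the technical heart and then do not carry out: you state that one should "track how the local permutation of the three endpoints affects each of the three sums and verify directly that each is preserved," but no verification is given. This is precisely the step where something could in principle go wrong, since the three chords of the triangle are mutually linked, so each of $W(L_p)$, $W(L_q)$, $W(L_r)$ receives contributions from the other two triangle chords, and the move rearranges which endpoints are encountered first relative to the dot. One must check that these internal contributions cancel in pairs (the paper records this in its table as exponents of the form $W(L_a)+1-1$, i.e.\ the two other triangle chords contribute with opposite signs both before and after the move), and also that the over/under data of the triangle chords relative to every outside chord is unchanged. Your reduction to a single oriented representative via RII moves is a valid simplification, but until that one representative is actually computed the proof is incomplete. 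Everything else in the proposal is correct and parallels the paper.
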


\begin{proof}  From the definition it is clear that the Wriggle Polynomial is invariant under all virtual Reidemeister moves.  For classical Reidemeister moves it is enough to check how the polynomial behaves under the 6 oriented moves in Figure 10.  Note that in Figure 10 we give the RIII moves in Gauss diagram form.\\

\begin{figure}[h]
\begin{center}
\includegraphics[scale = 0.8]{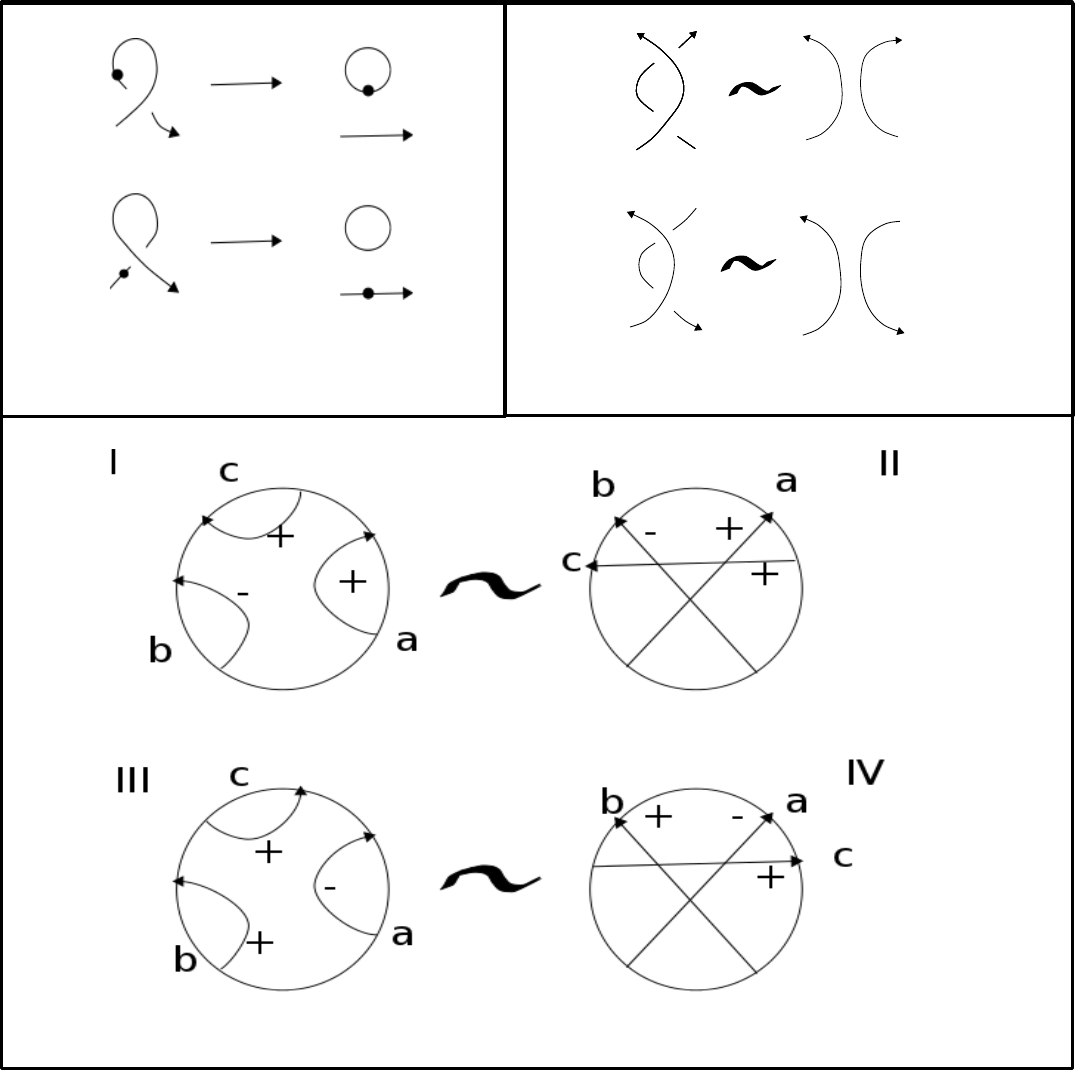}
\end{center}
\caption{A generating set of 6 oriented Reidemeister Moves}
\end{figure}

We already know that writhe(K) is invariant under RII and RIII moves, so let's first check that \\
\centerline{$\displaystyle\sum_{c \in C}sign(c)t^{W(L_c)}$}\\
is invariant under RII and RIII moves.  Recall that is was already shown that the \textit{wriggle number}, W(L), is a knot invariant.  Thus if an RII/RIII move occurs between the 2 link components after smoothing crossing c, $W(L_c)$ remains unchanged (and so does our summation for calculating the polynomial).  All that needs to be checked is the contributions to the polynomial when we smooth crossings at potential RII/RII moves.\\

{\bf\underline{RII Moves - Case I}}
Refer now to Figure 11. The sublink resulting from smoothing crossing a, $L_a$ is on the left, and the sublink resulting from smoothing crossing b, $L_b$, is on the right. Crossing b does not contribute to $W(L_a)$ because it is a self-intersection, and similarly crossing a does not contribute to $W(L_b)$. It is clear that $L_a$ and $L_b$ are the same link with the same order on their components, and so $W(L_a)$ = $W(L_b)$.  When calculating $W_K(t)$, sublinks $L_a$ and $L_b$ have a net contribution of zero in the summation.\\
\centerline{$(+1)t^{W(L_a)} + (-1)t^{W(L_b)} = 0$}\\

\begin{figure}[h]
\begin{center}
\includegraphics[scale = 0.4]{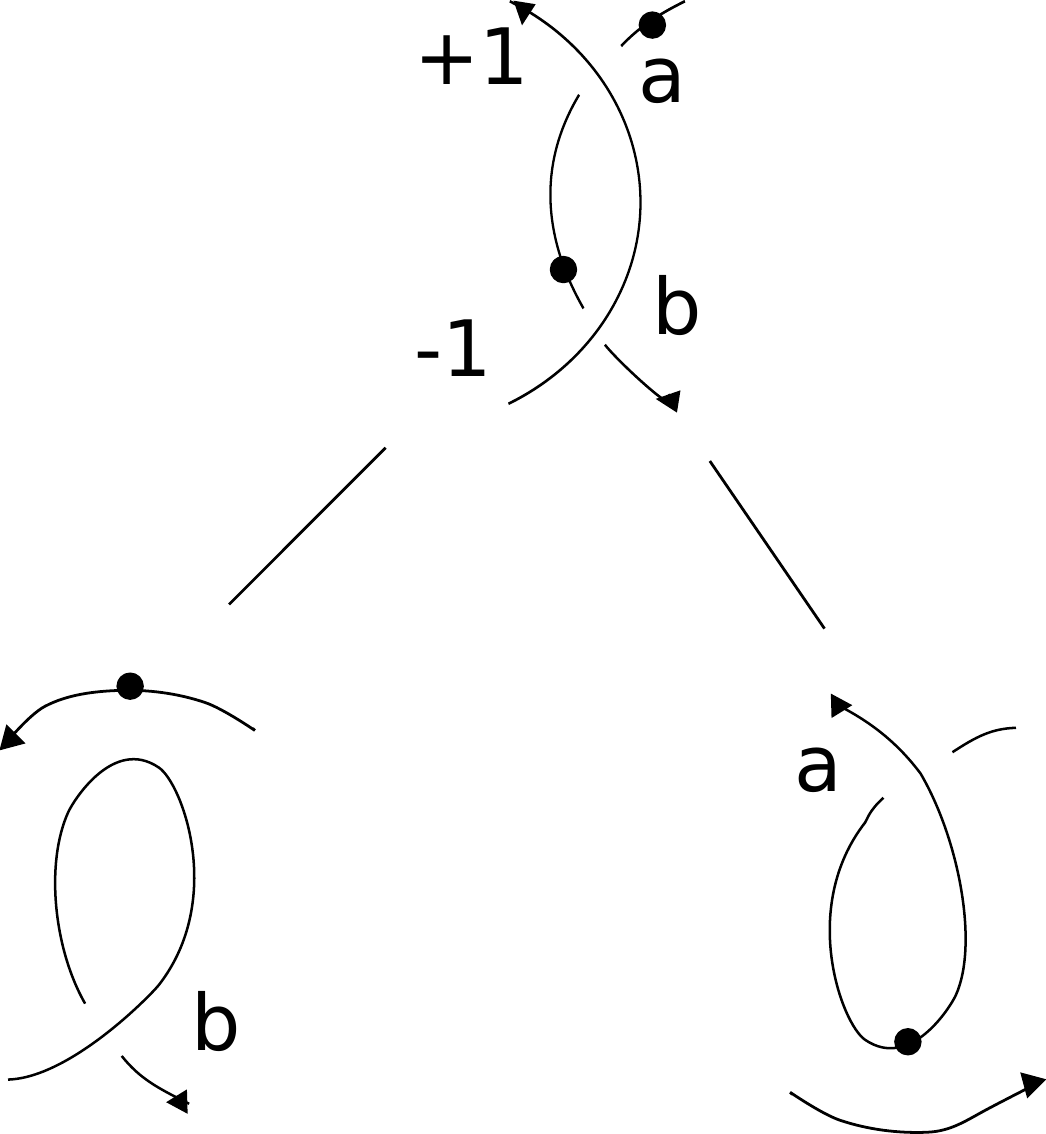}
\end{center}
\caption{Smoothing a crossing in an RII move - Case 1}
\end{figure}

{\bf\underline{RII Moves - Case II}}
Refer now to Figure 12.  $L_a$ is on the left and $L_b$ is on the right. Either both crossings b and a contribute to $W(L_a)$ and $W(L_b)$, resp., or neither does (i.e they are both crossings between the 2 link components, or both self-crossings within 1 link component).  If both are self-crossings, there is no net change to $W_K(t)$.  Consider what happens when they are both crossings between 2 link components.  In calculating $W(L_a)$ we travel under crossing b, which is positive.  In calculating $W(L_b)$ we travel over crossing a, which is negative. In $W(L_a)$ we subtract +1 and in $W(L_b)$ we add -1.  Since the rest of the crossings in $L_a$ and $L_b$ are the same, $W(L_a)$ = $W(L_b)$. The contribution to the summation when calculating $W_K(t)$ is:\\
\centerline{$  (-1)t^{W(L_a)} + (+1)t^{W(L_b)} = 0$}\\

\begin{figure}[h]
\begin{center}
\includegraphics[scale = 0.4]{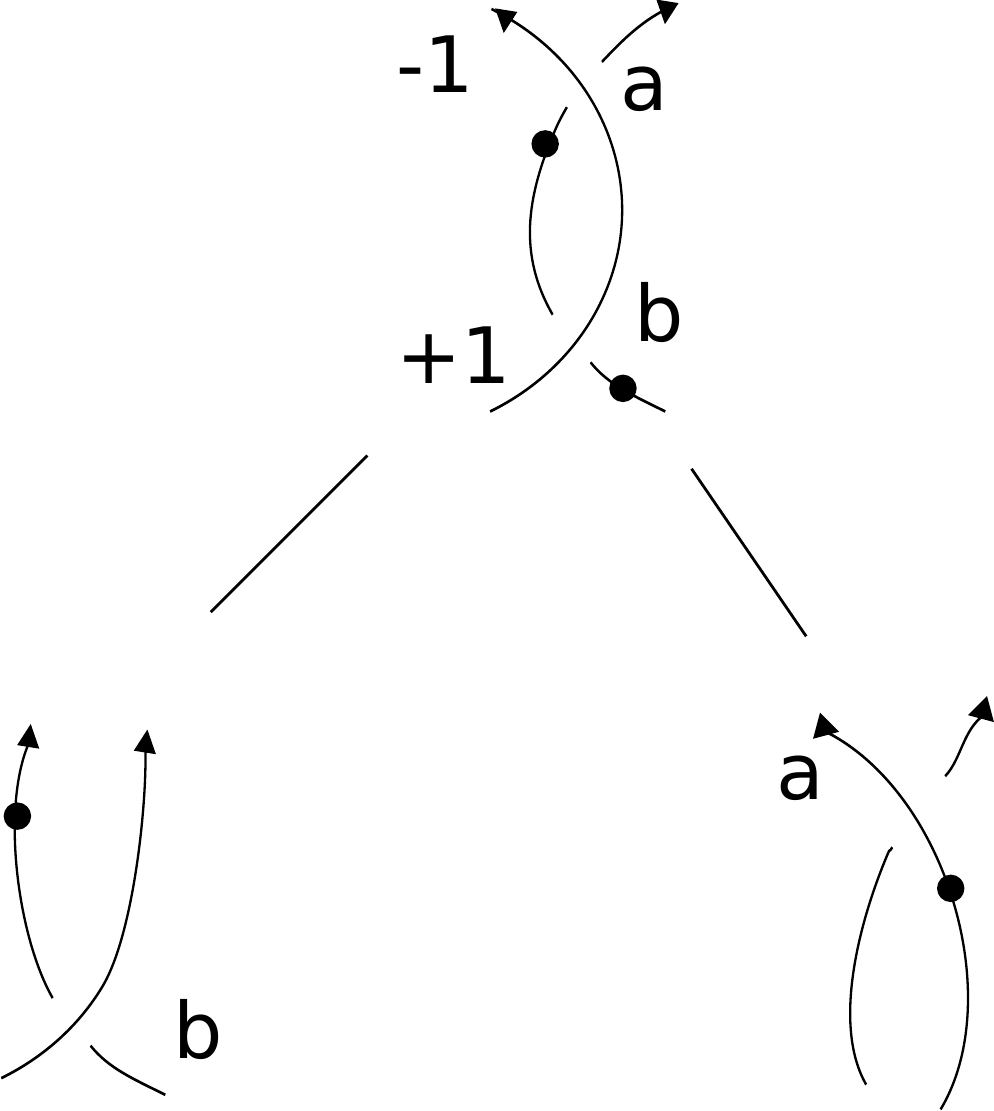}
\end{center}
\caption{Smoothing a crossing in an RII move - Case 2}
\end{figure}

{\bf\underline{RIII Moves}}
\begin{figure}[h]
\begin{center}
\includegraphics[scale = 0.4]{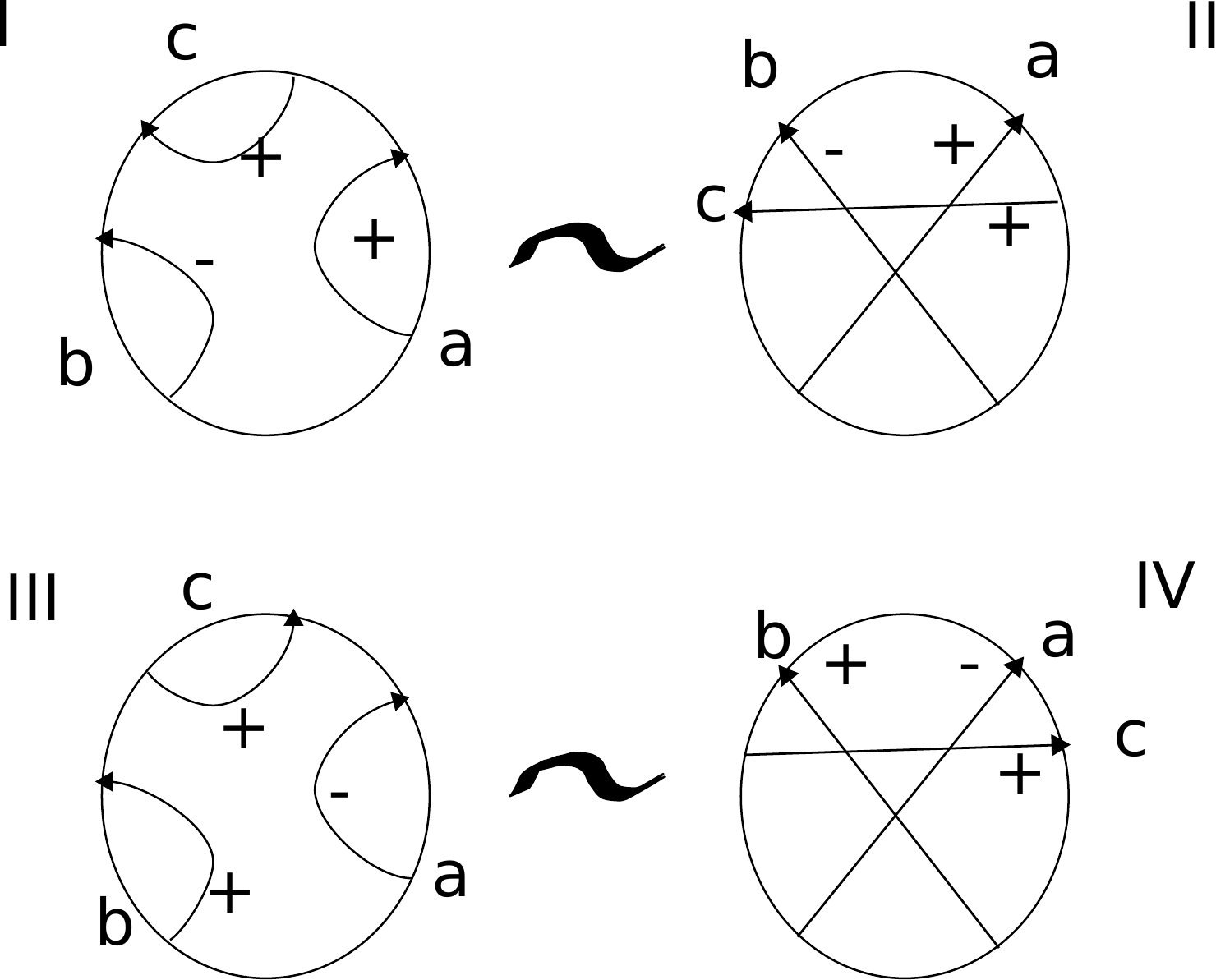}
\end{center}
\caption{Oriented RIII Moves represented on Gauss Diagrams}
\end{figure}

The table below keeps track of the combinatorics of contributions to the calculation of $W_K(t)$ around a Reideimeister III move.  Note that in Figure 10 and Figure 13 we give the RIII move in Gauss Diagram form.  There is no net change to our summation when calculating the polynomial.\\

\begin{center}
\begin{tabular} {|c|c|c|c|c|}
\hline
 & I & II & III & IV \\\hline
$L_a$ & $+t^{W(L_a)}$ & $+t^{W(L_a) + 1 - 1}$  &  $-t^{W(L_a)}$ & $-t^{W(L_a) - 1 + 1}$ \\\hline
$L_b$ & $-t^{W(L_b)}$ & $-t^{W(L_b) + 1 - 1}$  &  $+t^{W(L_b)}$ & $+t^{W(L_a) + 1 - 1}$ \\\hline
$L_c$ & $+t^{W(L_c)}$ & $+t^{W(L_c) - 1 + 1}$  &  $+t^{W(L_c)}$ & $+t^{W(L_c) + 1 - 1}$ \\\hline
\end{tabular}
\end{center}

{\bf\underline{RI Moves}}

\begin{figure}[h]
\begin{center}
\includegraphics[scale = 0.5]{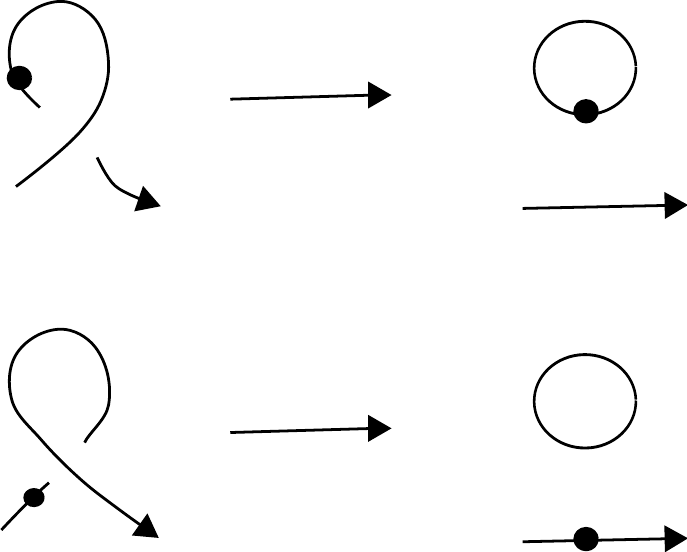}
\end{center}
\caption{Smoothing a crossing in an RI move}
\end{figure}

Refer now to Figure 14.  Smoothing at an RI crossing, c, gives an unlink with $W(L_c)$ = 0, so the sign of the smoothed crossing is contributed to the summation. \\
\centerline{$ sign(c) t^{W(L_c)} = sign(c) t^0 = sign(c)$}

This makes the summation we were investigating\\

\centerline{$\displaystyle\sum_{c \in C}sign(c)t^{W(L_c)}$}

 not invariant under RI.  Now we can normalize the polynomial by subtracting off the writhe(K).  Then the Wriggle Polynomial
\begin{center}
$W_K(t) = \displaystyle\sum_{c \in C}sign(c)t^{W(L_c)} - writhe (K)$
\end{center}
is invariant under classical RI, RII, and RIII moves.

Since the polynomial can be calculated from the Gauss diagram, we should note here that the calculation is not affected by any virtual moves, making the polynomial a virtual knot invariant.  The poynomial is trivially zero on classical knots as a consequence of the wriggle number being constantly zero for all classical links.
\end{proof}

\begin{theorem}
Consider a knot $K$, its inverse, $K^-$, its mirror image $K^*$, and the connected sum $K$\#$K'$.\\
1) $W_K(t) = W_{K^-}(t^{-1})$\\
2) $W_K(t) = -W_{K^*}(t)$\\
3) $W_{K \# K'}(t) = W_K(t) + W_{K'}(t)$\\
\end{theorem}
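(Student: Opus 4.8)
The plan is to treat each of the three identities by tracking how the two ingredients of $W_K(t)$ transform under the operation in question: the crossing signs $\mathrm{sign}(c)$ (which also assemble into $\mathrm{writhe}(K)$) and the sublink wriggle numbers $W(L_c)$. Throughout I would lean on Lemma 1, which says that reversing the order of the two components of a link negates its wriggle number, and on the locality of the wriggle number, namely that only linking crossings between the two components contribute to $W(L_c)$.

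For part (1) I would argue that reversing the orientation of $K$ leaves every crossing sign unchanged, since reversing both strands at a crossing preserves its sign; hence $\mathrm{writhe}(K^-)=\mathrm{writhe}(K)$ and the coefficients $\mathrm{sign}(c)$ are untouched. The only effect is on the exponents. At a smoothed crossing $c$, reversing orientation reverses the orientation of both components of $L_c$, which leaves $lk_O$ and $lk_U$ individually unchanged, but the ordering dot, which marks the incoming under-strand, now sits on what was the outgoing under-strand, that is, on the other component of $L_c$. Thus $L_c$ reappears with its two components in the opposite order, and Lemma 1 gives $W(L_c)\mapsto -W(L_c)$. Substituting into the definition turns every $t^{W(L_c)}$ into $t^{-W(L_c)}$, which is exactly the substitution $t\mapsto t^{-1}$, proving $W_K(t)=W_{K^-}(t^{-1})$.

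For part (2) the mirror $K^*$ switches every crossing, so each $\mathrm{sign}(c)$ flips and hence $\mathrm{writhe}(K^*)=-\mathrm{writhe}(K)$; this supplies a global factor $-1$. The heart of the matter is the exponents. Switching all crossings interchanges over- and under-passes, so along a fixed first component the sets $\mathrm{Over}(C)$ and $\mathrm{Under}(C)$ are exchanged while every sign is negated; unwinding the definition of the wriggle number, $lk_O$ and $lk_U$ are swapped and simultaneously negated, so that $W(L_c)=lk_O-lk_U$ returns to itself. Granting that the exponents are unchanged, each term becomes $-\mathrm{sign}(c)\,t^{W(L_c)}$ and the normalization contributes $+\mathrm{writhe}(K)$, giving $W_{K^*}(t)=-W_K(t)$. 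The main obstacle lives precisely here: I must check that the mirror is compatible with the dot convention used to order the components of $L_c$, because switching the crossing at $c$ also moves the dot from the incoming under-strand to the incoming over-strand. Pinning down this interaction, and confirming that the induced ordering is the one for which the swap-and-negate computation above applies, is the delicate step, and it is where one must be careful not to introduce a spurious $t\mapsto t^{-1}$.

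For part (3) I would use locality. In a connected sum the crossing set is the disjoint union $C(K)\sqcup C(K')$, the signs are inherited, and $\mathrm{writhe}(K\#K')=\mathrm{writhe}(K)+\mathrm{writhe}(K')$. Fix $c\in C(K)$ and smooth it; the summand $K'$ is attached along a single arc, so it lies entirely on one of the two components of $L_c$ and its crossings are all self-crossings of that component. By locality they do not contribute to $W(L_c)$, so $W(L_c)$ computed in $K\#K'$ equals its value in $K$, and symmetrically for $c\in C(K')$. The sum defining $W_{K\#K'}(t)$ therefore splits as $\big(\sum_{C(K)}\mathrm{sign}(c)t^{W(L_c)}-\mathrm{writhe}(K)\big)+\big(\sum_{C(K')}\mathrm{sign}(c)t^{W(L_c)}-\mathrm{writhe}(K')\big)=W_K(t)+W_{K'}(t)$, giving additivity. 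Of the three parts I expect (1) and (3) to be routine once Lemma 1 and locality are invoked, with the genuine work concentrated in the ordering bookkeeping of part (2).
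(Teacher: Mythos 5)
The paper states this theorem without proof, so there is no argument of the authors' to compare yours against; I can only assess your proposal on its own terms. Parts (1) and (3) are correct and complete. In (1) you rightly observe that orientation reversal fixes all crossing signs and the unordered smoothed link, and that the dot migrates from the incoming to the outgoing under-strand, which lies on the other component of $L_c$ (the oriented smoothing joins the incoming under-strand to the outgoing over-strand, so the two under-strand ends lie on different components); Lemma 1 then negates every exponent and produces exactly the substitution $t\mapsto t^{-1}$. In (3) the locality argument is exactly right: $K'$ sits on a single arc of $K$, hence on one component of each $L_c$ with $c\in C(K)$, and contributes only self-crossings, so the sum splits.

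Part (2) contains the genuine gap, and you have located it precisely but not closed it -- and closing it does not rescue the statement in the form you set up. If $K^*$ is the mirror obtained by switching all crossings, the dot at the smoothed crossing $c$ moves to the other incoming strand and hence to the other component of $L_c$, exactly as in part (1); so after your (correct) swap-and-negate computation showing that crossing-switching preserves the wriggle number of the \emph{ordered} link, Lemma 1 still negates the exponent, and you obtain $W_{K^*}(t)=-W_K(t^{-1})$ rather than $-W_K(t)$. The same extra $t\mapsto t^{-1}$ appears if $K^*$ is instead taken to be the planar reflection of the diagram: there the dot stays on the same component, but every linking crossing's sign flips while the Over/Under partition is preserved, so $W(L_c)\mapsto -W(L_c)$ directly. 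Since the polynomial is not symmetric under $t\mapsto t^{-1}$ in general (e.g.\ $W_{K2}(t)=t^{-3}+3t$ in Figure 26), the identity $W_{K^*}(t)=-W_K(t)$ can only hold for a mirror convention that also reverses orientation, in which case it follows by composing the mirror formula $-W_K(t^{-1})$ with part (1). You should either fix the convention for $K^*$ so that the stated identity is the true one, or accept the corrected exponent; as written, ``granting that the exponents are unchanged'' is precisely the step that fails.
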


Notice that if we restrict the Wriggle Polynomial so that it only sums over the odd crossings in a diagram, the writhe(K) no longer has to be subtracted to normalize the polynomial (since RI crossings are even).\\

\begin{definition}
The \underline{\textit{Odd Wriggle Polynomial}}, $\hat{W}_K(t)$, is defined as\\
\begin{center}
$\hat{W}_K(t) = \displaystyle\sum_{c \in Odd(C)}sign(c)t^{W(L_c)}$
\end{center}
where the summation occurs over all odd crossings of the diagram.
\end{definition} 

The Odd Wriggle Polynomial can be shown to be equivalent to the Odd Writhe Polynomial defined by Cheng \cite{OW}.  Equality occurs after multiplying the Odd Writhe Polynomial by $t^{-1}$.

\begin{theorem}
The Odd Wriggle Polynomial is an invariant of virtual knots.
\end{theorem}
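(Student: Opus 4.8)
The plan is to follow the structure of the proof that $W_K(t)$ is a virtual knot invariant, but to restrict the crossing sum to odd crossings and to exploit the way crossing parity behaves under the Reidemeister moves. As before, the polynomial is computed from the Gauss diagram, so the virtual moves change nothing and only the three classical moves require attention. The essential new ingredient is a parity bookkeeping lemma: (i) the crossing created or destroyed by an RI move is always even; (ii) the two crossings created or destroyed by an RII move have equal parity; and (iii) an RIII move preserves the parity of each of its three crossings. Moreover, in every case the parity of each crossing not directly involved in the move is unchanged. All of these follow from the definition of parity by counting the number of terms between the two appearances of a crossing in the Gauss code, since each local move inserts, deletes, or rearranges an even number of symbols between the two occurrences of any fixed external crossing.

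Granting this lemma, the three classical cases become short. For an RI move the new crossing is even by (i), so it never enters the sum $\sum_{c \in Odd(C)} sign(c)t^{W(L_c)}$; this is precisely why no writhe correction is needed to normalize $\hat{W}_K(t)$, in contrast to $W_K(t)$. For an RII move, (ii) guarantees that the two crossings $a$ and $b$ are either both even, in which case neither is summed, or both odd, in which case both are summed; in the latter case the same computation used for $W_K(t)$ (Cases I and II) gives $W(L_a) = W(L_b)$ with opposite signs, so their contributions cancel: $(+1)t^{W(L_a)} + (-1)t^{W(L_b)} = 0$. In either subcase the odd sum is unchanged, and since every other crossing keeps both its parity and (by invariance of the wriggle number) its weight $W(L_c)$, the whole of $\hat{W}_K(t)$ is preserved.

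The RIII move is handled by reading the invariance proof for $W_K(t)$ one crossing at a time. The table established there shows that across an RIII move the sign of each of the three crossings is unchanged and that each weight $W(L_a)$, $W(L_b)$, $W(L_c)$ returns to its original value (the exponents shift by $+1-1$, i.e. by zero), so each term $sign(c)t^{W(L_c)}$ is individually invariant, not merely their sum. By (iii) the odd crossings among $\{a,b,c\}$ form the same subset before and after the move, so summing these individually invariant terms over exactly the odd crossings again yields no change. Combining the three cases with the triviality of the virtual moves shows that $\hat{W}_K(t)$ is a virtual knot invariant.

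The main obstacle is the parity lemma itself, and within it the RIII statement (iii) together with the claim that external crossings never change parity. These are not difficult but require a careful accounting of how the Gauss code is edited by each move: one must check that the relevant symbol rearrangement always alters the count of terms lying between the two appearances of any fixed crossing by an even number. Once that bookkeeping is in place, the rest of the argument is a direct restriction of the already-established invariance of $W_K(t)$.
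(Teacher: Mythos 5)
Your proposal is correct and takes essentially the same route as the paper: the paper's (much terser) proof likewise rests on the facts that RI crossings are even, that the two crossings in an RII move share parity, and that RIII preserves parity, with the weight computations inherited from the invariance proof for $W_K(t)$. Your version simply makes explicit the parity bookkeeping in the Gauss code and the term-by-term invariance in the RIII table that the paper leaves to the reader.
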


\begin{proof} It is easy to check that an RIII move does not affect whether a crossing is even or odd, and thus will not affect the calculation of the polynomial.  In an RII move, the 2 crossings involved are either both even or both odd, and so the polynomial remains unchanged.
\end{proof}

\section{Cosmetic Crossing Change Conjecture}
\begin{definition}
A \underline{\textit{nugatory}} crossing in a knot (coined by P.G. Tait) is a crossing that can be untwisted from the diagram.  An example of a nugatory crossing is one that can be undone with an RI move.  Figure 15 illustrates the most general form for a nugatory crossing.
\end{definition}

\begin{figure}[h]
\begin{center}
\includegraphics[scale = 0.3]{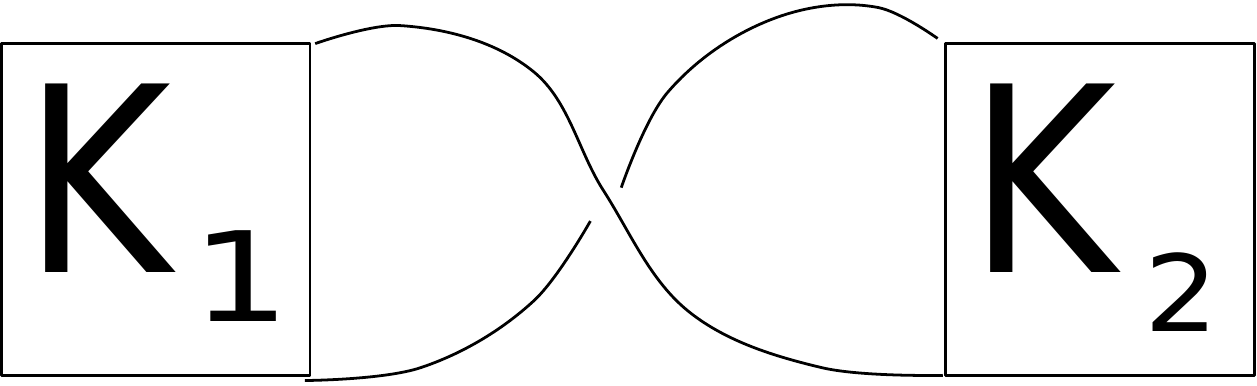}
\end{center}
\caption{A nugatory crossing}
\end{figure}

\begin{definition}
A \underline{\textit{crossing change}} in a knot diagram $D$ switches a positive crossing to a negative crossing (and vice versa).   A crossing change is said to be \underline{\textit{cosmetic}} if the new diagram, $D^{\prime}$, is isotopic (classically or virtually) to $D$.
\end{definition}

It is clear that all crossing changes on nugatory crossings are cosmetic, and these are called trivial crossing changes.\\

\underline{\textbf{Open Question:}}  Do non-trivial cosmetic crossing changes exist? The question was stated as problem 1.58 of Kirby's Problem List, and attributed to Xiao-Song Lin \cite{Kirby}.   There exist examples of $K_1$ and $K_2$ that differ by a single crossing change and are equivalent but their orientation is reversed \cite{Effie}.  For example, $K_1$ = P(3,1,-3) and $K_2$ = P(3, -1, -3).\\

The general feeling is that no non-trivial cosmetic crossing changes exist.  The work so far has shown this for classes of classical knots.  We will prove this conjecture for some classes of virtual knots.\\
\begin{center}
\begin{tabular}{| c | l |}
\hline
The Unknot & Scharlemann-Thompson (CMH, 87) \\\hline
Two Bridge Knots & I.Torisu (TAIA, 97) \\\hline
Fibered Knots & Kalfagianni (Crelle, 11) \\\hline
Genus one knots & Kalfagianni, Balm, Friedl, Powell (2012) \\\hline
\end{tabular}
\end{center}

\begin{theorem}
Odd Virtual Knots do not admit cosmetic crossing changes.
\end{theorem}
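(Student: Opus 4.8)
The plan is to detect the crossing change with the odd writhe $J(K)$, which is the same as the evaluation $\hat{W}_K(1)$ of the Odd Wriggle Polynomial at $t=1$. The first step I would carry out is a purely combinatorial observation about what a crossing change does to the Gauss code. Performing a crossing change at a crossing $c$ flips only the over/under decoration and the sign recorded at $c$; it leaves the cyclic sequence of symbols in the Gauss code otherwise untouched. Hence the number of terms occurring between the two appearances of \emph{any} crossing is unaffected, so every crossing retains its parity. In particular the altered crossing $c$ is still odd and the new diagram $K'$ is again an odd virtual knot, so the relevant invariants are defined and comparable on both sides.

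Next I would compute the response of $J$ to the change. Writing $J(K)=\sum_{c\in Odd(C)}\mathrm{sign}(c)$, a crossing change reverses exactly one sign while fixing the sign and the parity of every other crossing, so $J(K')=J(K)-2\,\mathrm{sign}(c)=J(K)\mp 2$. Since $J$ is a virtual knot invariant (equivalently $\hat{W}_K(1)=J(K)$ is invariant), $J(K')\neq J(K)$ forces $K'\not\simeq K$, and therefore the crossing change cannot be cosmetic. To finish the theorem as stated I would note that in an odd virtual knot \emph{every} crossing is odd, and that a nugatory crossing is necessarily even; thus there are no trivial (nugatory) crossing changes to set aside, and the computation above rules out a cosmetic change at each of the diagram's crossings.

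The only points needing genuine care are the parity-preservation claim and the remark that nugatory crossings are even; both are short arguments at the level of the Gauss code. The pleasant feature of this route, and the reason I would prefer specializing to $t=1$ rather than working with the full polynomial, is that it entirely sidesteps the delicate bookkeeping of how the individual weights $W(L_c)$ transform under the crossing change: collapsing all exponents to $1$ leaves only the signs, which behave transparently. I expect the main conceptual obstacle to lie not here but in the companion result for pure virtual knots, where the odd writhe is uninformative and one must instead extract the obstruction from the higher-degree structure of $W_K(t)$ itself.
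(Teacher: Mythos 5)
Your proof is correct, but it takes a genuinely different and more elementary route than the paper. The paper works with the full Odd Wriggle Polynomial: it proves two claims about how the exponents transform under the crossing change, namely $W(L_{c_1}) = -W(L_{\hat{c}_1})$ (via the dot switching components and Lemma 1) and $W(L_{c_i}) = W(L_{\hat{c}_i})$ for $i \geq 2$, and concludes $\hat{W}_{K_+}(t) - \hat{W}_{K_-}(t) = t^k + t^{-k} \neq 0$. You instead specialize to $t=1$, where the polynomial collapses to the odd writhe $J(K)$, observe that a crossing change preserves the parity of every crossing (the Gauss code positions are untouched) while flipping exactly one sign, and get $J(K') = J(K) \mp 2 \neq J(K)$. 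This sidesteps all of the paper's bookkeeping about the weights $W(L_c)$, and your auxiliary remarks (parity preservation, nugatory crossings being even) are easy and correct --- indeed the $\pm 2$ computation by itself already rules out nugatory crossings in an odd diagram, so that aside is not strictly needed. What the paper's heavier argument buys is exactly what you anticipate at the end: the same two claims, applied to the full Wriggle Polynomial at an \emph{even} crossing, give the difference $t^k + t^{-k} - 2$, which vanishes only when $k = W(L_c) = 0$; this is what yields the corollary for pure virtual knots, where the crossing may be even but non-classical. Your $t=1$ specialization destroys precisely that information (the difference evaluates to $0$ at $t=1$ for even crossings), so the paper's route does the weight analysis once and harvests both results, while yours gives a shorter standalone proof of the odd case only.
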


\begin{proof}  Recall that an odd virtual knot has only odd crossings.  Let $K_+$ and $K_-$ be two knots that differ from each other by a sign change at some odd crossing.  Without loss of generality, assume $K_+$ is postive at the respective crossing and $K_-$ is negative.  We will show $K_+$ is not isotopic to $K_-$ by considering the Odd Wriggle Polynomial and showing that $\hat{W}_{K_+}(t) - \hat{W}_{K_-}(t) \neq 0$.\\

Symmetrically label the odd crossings $c_1, ... , c_n$ in $K_+$ and $\hat{c}_1, ... , \hat{c}_n$ in $K_-$, with $c_1$ being the crossing that switched from positive to negative.\\
\centerline{$C = \{ c_1, ... , c_n \} \ \ \ \ \ \hat{C} = \{ \hat{c}_1, ... , \hat{c}_n \} $}\\
\centerline{$sign(c_1) = +1 \ \ \ \ \ \ sign(\hat{c}_1) = -1$}\\ 
$\hat{W}_{K_+}(t) - \hat{W}_{K_-}(t) = t^{W(L_{c_1})} + t^{W(L_{\hat{c}_1})} + \displaystyle\sum_{i = 2}^n sign(c_i)t^{W(L_{c_i})} - sign(\hat{c}_i)t^{W(L_{\hat{c}_i})}$\\
Note that  $sign(c_i) = sign(\hat{c}_i)$ for i = 2, ..., n.\\

\underline{\textit{Claim 1}} $W(L_{c_1}) = -W(L_{\hat{c}_1})$\\
\textit{Proof of Claim 1.} Since $K_+$ and $K_-$ differ only by a crossing switch at $c_1$ (resp. $\hat{c}_1$), smoothing the crossing $c_1$ in $K_+$ results in the same link as smoothing the crossing $\hat{c}_1$ in $K_-$.  Recall the we calculate the wriggle number by ordering the link obtained from the smoothed crossing by placing a dot on the understrand before entering the crossing.  After smoothing, the strand with the dot will be the first component.  \\

\begin{figure}[h]
\begin{center}
\includegraphics[scale = 0.5]{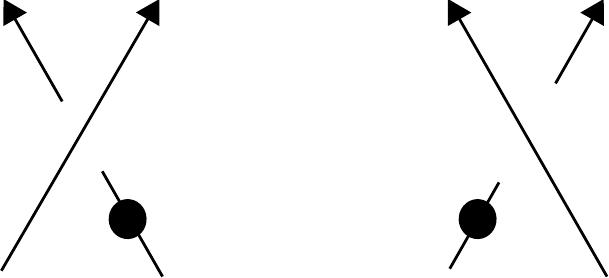}
\end{center}
\caption{A crossing change places the dot on the other component}
\end{figure}

Figure 16 shows switching a crossing from positive to negative and then smoothing results in the dot being placed on the other link component.  So $L_{c_1}$ and $L_{\hat{c}_1}$ are the same link, with the order of the components switched.  By Lemma 1, we know that W($L_{c_1}$) = -W($L_{\hat{c}_1}$).  To simplify notation, from this point on we will let W($L_{c_1}$) = k.\\

\underline{\textit{Claim 2.}} $W(L_{c_i}) = W(L_{\hat{c}_i})$ for i = 2, ... , n \\
\textit{Proof of Claim 2.}  We must consider how the contribution of $c_1$ and $\hat{c}_1$ affect $W(L_{c_i})$ and $W(L_{\hat{c}_i})$, respectively.  Note that $c_1$ contributes to $W(L_{c_i})$ iff $\hat{c}_1$ contributes to $W(L_{\hat{c}_i})$ (since $L_{c_i}$ and $L_{\hat{c}_i}$ are the same link, with opposite signs on crossings $c_1$ and $\hat{c}_1$).  If $c_1$  and $\hat{c}_1$ are self-crossings in links $L_{c_i}$ and $L_{\hat{c}_i}$  we have $W(L_{c_i}) = W(L_{\hat{c}_i})$ and the net contribution to the summand\\

\[\displaystyle\sum_{i=2}^n sign(c_i)[t^{W(L_{c_i})} - t^{W(L_{\hat{c}_i})}]\] is zero.\\  

Without loss of generality, we may assume that the crossings $c_1$ and $\hat{c}_1$ contribute to the summand.

\begin{center}
$\hat{W}_{K_+}(t) - \hat{W}_{K_-}(t) = t^k + t^{-k} - \displaystyle\sum_{i=2}^n sign(c_i)[t^{W(L_{c_i})} - t^{W(L_{\hat{c}_i})}]$
\end{center}

Now $c_1$ contributes to W($L_{c_i}$) a positive sign (added if we pass over $c_1$ and subtracted if we pass under $c_1$).  Notice that $\hat{c}_1$ contributes a negative sign to W($L_{\hat{c}_i}$) (added if we pass over $\hat{c}_1$ and subtracted if we pass under $\hat{c}_1$).  Traveling over $c_1$ means we travel under $\hat{c}_1$.  Therefore in W($L_{c_i}$) the positive sign is added (or subtracted) and in W($L_{\hat{c}_i}$) the negative sign is subtracted (or added), making W($L_{c_i}$) = W($L_{\hat{c}_i}$).\\

So W($L_{c_i}$) = W($L_{\hat{c}_i}$) for i = 2, ... , n.\\

Therefore, letting W($L_{c_1}$) = k we get
\begin{center}
$\hat{W}_{K_+}(t) - \hat{W}_{K_-}(t) = t^k + t^{-k} \neq 0$
\end{center}

And so $K_+$ is not isotopic to $K_-$.
\end{proof}

We now ask what happens if a cosmetic crossing change occurs at an even crossing in a virtual knot.  Calculating the difference of the Wriggle Polynomial for the knots results in:\\
\centerline{$W_{K_+}(t) - W_{K_-}(t) = t^k + t^{-k} - 2$}\\
This polynomial is zero iff k = 0, where k is the wriggle number of the sublink obtained by smoothing the crossing where the sign change occurred.\\

\begin{definition}
A crossing c in a knot is called a \underline{\textit{classical crossing}} if $W(L_c)$ = 0.   If $W(L_c) \neq 0$, c is called a \underline{\textit{non-classical crossing}}.
\end{definition}

Very appropriately, all crossings in classical knots are ``classical crossings".  Crossings in virtual knots can be labeled labeled classical or non-classical.  There exist examples of virtual knots that have all non-classical crossings, all classical crossings, and mixed classical/non-classical crossings.

\begin{definition}
A virtual knot is called a \underline{\textit{pure virtual knot}} if all its crossings are odd and/or non-classical.  For comparison, in classical knots all crossings are even and classical.
\end{definition}

\begin{corollary}
Pure Virtual Knots do not admit cosmetic crossing changes.
\end{corollary}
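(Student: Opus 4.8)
The plan is to reduce the corollary to the two crossing-change computations already in hand. By definition a pure virtual knot has the property that every crossing is odd or non-classical (possibly both); equivalently, no crossing is simultaneously even and classical. So if $K_+$ and $K_-$ differ by a single crossing change at a crossing $c$, I would split into two cases according to the parity of $c$ and, in each case, exhibit a polynomial invariant that separates $K_+$ from $K_-$. Before the case analysis I would record the one structural fact that makes everything go through: a crossing change leaves the cyclic sequence of labels in the Gauss code unchanged, so it preserves the parity of every crossing and, in particular, fixes both the set of odd crossings and the parity of the changed crossing $c$.

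In the first case, when $c$ is odd, I would invoke the Odd Wriggle Polynomial exactly as in the proof that odd virtual knots admit no cosmetic crossing change. The computation of Claims 1 and 2 there concerns only the odd crossings of the diagram and uses nothing about the remaining crossings, so it applies verbatim even when $K_\pm$ also carry even crossings. Writing $k = W(L_c)$, Claim 1 gives $W(L_{\hat c}) = -k$ while Claim 2 makes every other odd crossing cancel in the difference, so that $\hat W_{K_+}(t) - \hat W_{K_-}(t) = t^k + t^{-k}$. Since this Laurent polynomial equals $2$ when $k = 0$ and is a sum of two distinct monomials when $k \neq 0$, it is never the zero polynomial; as $\hat W$ is a virtual knot invariant, $K_+$ and $K_-$ are not isotopic.

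In the second case, when $c$ is even, purity forces $c$ to be non-classical, so $W(L_c) = k \neq 0$. Here I would use the full Wriggle Polynomial and the computation displayed just before the corollary: the changed crossing contributes $t^k + t^{-k}$ to the difference, the writhe normalization contributes $-2$, and all remaining crossings cancel by the same Claim 1 and Claim 2 reasoning, giving $W_{K_+}(t) - W_{K_-}(t) = t^k + t^{-k} - 2$. Because $k \neq 0$ the monomials $t^{\pm k}$ are non-constant and cannot cancel the $-2$, so the difference is nonzero and again $K_+$ and $K_-$ are not isotopic. Combining the two cases shows that a crossing change at any crossing of a pure virtual knot alters an invariant, hence is non-cosmetic.

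The hard part is conceptual rather than computational: one must verify that Claims 1 and 2 from the odd-virtual argument genuinely survive the presence of even crossings, i.e. that the equalities $W(L_{\hat c}) = -W(L_c)$ and $W(L_{c_i}) = W(L_{\hat c_i})$ depend solely on the local over/under flip at the changed crossing and on the dot-ordering convention, not on the global parity structure of the diagram. This is exactly what the earlier arguments establish, so no new estimate is needed. The entire weight of the corollary then rests on matching the parity of $c$ to the correct polynomial — the odd version for odd crossings, which carries no writhe correction and so avoids the spurious $-2$, and the full version for even non-classical crossings, where $k \neq 0$ keeps the $-2$ from being cancelled — precisely the two situations that the pure virtual hypothesis guarantees.
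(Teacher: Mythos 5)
Your proposal is correct and follows exactly the route the paper intends: the corollary is stated without a separate proof precisely because the odd case is Theorem 7 (via the Odd Wriggle Polynomial, giving $t^k+t^{-k}\neq 0$) and the even case is the displayed computation $W_{K_+}(t)-W_{K_-}(t)=t^k+t^{-k}-2$, which is nonzero since purity forces $k\neq 0$ at an even crossing. Your additional check that Claims 1 and 2 survive the presence of even crossings, and that a crossing change preserves parity, is exactly the (unstated) verification the paper relies on.
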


By the corollaries, only crossings that are even AND classical in a virtual knot may possibly admit cosmetic crossing changes.\\

\section{The Wriggle Polynomial equals the Affine Index Polynomial}
We shall now consider the definition of the Affine Index Polynomial, as given by Kauffman \cite{AIP} and show the two definitions produce equal polynomials, which is not immediately clear.  We will refer to our definition as ``the linking number definition" and the new definition as ``the algebraic definition".  In calculating the Affine Inde Polynomial, we begin by labeling the arcs in the knot diagram as described in Figure 17.\\

\begin{figure}[h]
\begin{center}
\includegraphics[scale=0.5]{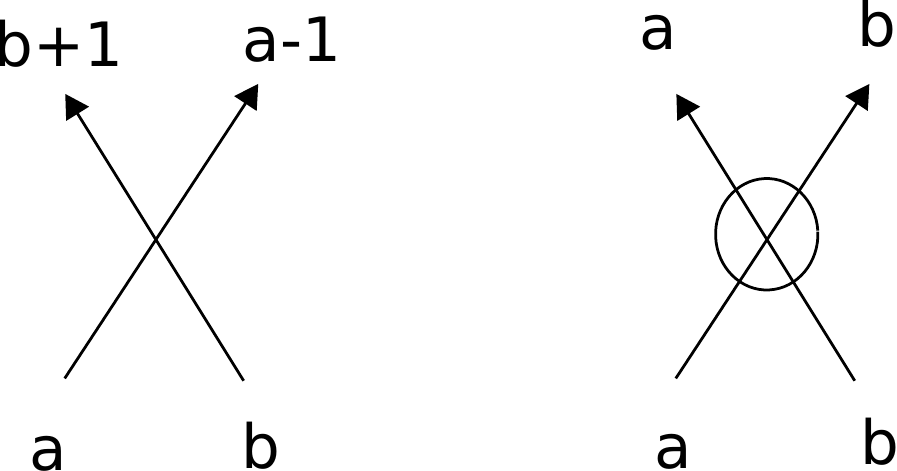}
\end{center}
\caption{Rules for labeling the knot diagram}
\end{figure}

To start the labeling of the knot diagram, choose any arc to label with 0.  To form the polynomial, at each classical crossing obtain a number as follows:\\
\centerline{$ W_+(c) = a - (b+1) \ \ OR \ \ \ W_-(c) = b - (a-1)$}\\
Which calculation is chosen depends on whether the crossing is positive or negative. The algebraic definition of the Affine Index Polynomial is given by this formula.\\
\begin{center}
$W_K (t) = \displaystyle\sum_{c \in C}sign(c)t^{W_{\pm}(c)} - writhe(K)$
\end{center}

\begin{theorem}
The Wriggle Polynomial equals the Affine Index Polynomial.
\end{theorem}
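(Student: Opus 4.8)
The plan is to reduce the theorem to a single local identity at each crossing. Both the linking number formula and the algebraic formula have the identical shape $\sum_{c \in C} sign(c)\, t^{(\ast)} - writhe(K)$, and the factors $sign(c)$ together with the correction term $writhe(K)$ are manifestly the same under the two definitions. Hence it suffices to prove that, at every classical crossing $c$, the two exponents agree:
\[ W(L_c) = W_{\pm}(c), \]
where the subscript $\pm$ is the sign of $c$. Every other ingredient of the statement then matches termwise, and equality of the polynomials is immediate.

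To establish this local identity I would read both exponents off the Gauss diagram of $K$. Fix $c$ and perform the oriented smoothing there; by Theorem 1 this produces a two–component link $L_c$, and the dot convention singles out the first component, say $\alpha$ (the strand carrying the dot on the incoming under-strand), leaving the remainder $\beta$ as the second component. A crossing $d \neq c$ contributes to $W(L_c)$ precisely when it is a linking crossing of $L_c$, that is, when one of its strands lies on $\alpha$ and the other on $\beta$; such a $d$ contributes $+sign(d)$ when we pass over it while traversing $\alpha$ and $-sign(d)$ when we pass under it. Thus $W(L_c)$ is a signed count of exactly those chords of the Gauss diagram whose two endpoints are separated by the two arcs incident to $c$.

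I then match this count to the arc labels of Figure 17. The integer labels are defined so that the value changes by a prescribed $\pm 1$ each time the strand is crossed, the increment depending on the sign of the crossing and on whether one passes over or under. Consequently the difference of the two labels appearing at $c$ is itself a signed sum over precisely the chords separating the two arcs at $c$ — the same chords that appear in $W(L_c)$. The constant offset (the $-1$ in $a-(b+1)$, respectively the $+1$ in $b-(a-1)$) accounts for the two strands meeting at $c$ itself, and the asymmetry between these offsets is exactly the asymmetry introduced by the dot convention, which places the dot on the incoming under-strand and thereby fixes which smoothed loop is the first component. Carrying out the comparison case by case on the sign of $c$ then yields $W(L_c) = W_+(c)$ when $c$ is positive and $W(L_c) = W_-(c)$ when $c$ is negative.

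The main obstacle is the sign bookkeeping rather than any conceptual difficulty: one must verify that the rule ``over counts with $+sign(d)$, under counts with $-sign(d)$'' in the wriggle number matches, chord for chord, the $\pm 1$ label increments of the affine labeling, and that the well-definedness of the affine labels (the labeling closes up consistently on going once around the knot) is compatible with the chosen first component. I would handle this by checking the base contribution at $c$ directly and then propagating the running label along $\alpha$, confirming that each crossing of $\beta$ by $\alpha$ alters the label by exactly the signed amount it contributes to $W(L_c)$. Once the over/under and positive/negative conventions are pinned down to agree with Figure 17, the local identity — and hence the equality of the Wriggle and Affine Index Polynomials — follows.
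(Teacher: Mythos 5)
Your proposal is correct and follows essentially the same route as the paper: reduce the theorem to the local identity $W(L_c) = W_{\pm}(c)$ at each crossing and verify it on the Gauss diagram by identifying both sides with a signed count of exactly the chords that cross the chord of $c$. The only real difference is in how the arc-label difference is evaluated --- you telescope the $\pm 1$ label increments along the smoothed component (with homebody chords cancelling), whereas the paper invokes Kauffman's lemma re-expressing each arc label as Cheng's global sum of overcrossing signs and then cancels the non-linking chords between the two labels; both yield the same chord-by-chord match.
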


\begin{proof} It is enough to show equality of the exponents, i.e. $W(L_c) = W_{\pm}(c)$.  We do this for the positive crossing by carefully analyzing each definition of weight on a Gauss diagram, and rewriting the definition of both weights, $W(L_c)$ and $W_{\pm}(c)$, as a new definition. Repeating the exact same analysis on the negative crossing completes the proof.  Figures 18 shows how the algebraic labeling of the knot diagram is transfered to a Gauss diagram.\\

\begin{figure}[h]
\begin{center}
\includegraphics[scale = 0.5]{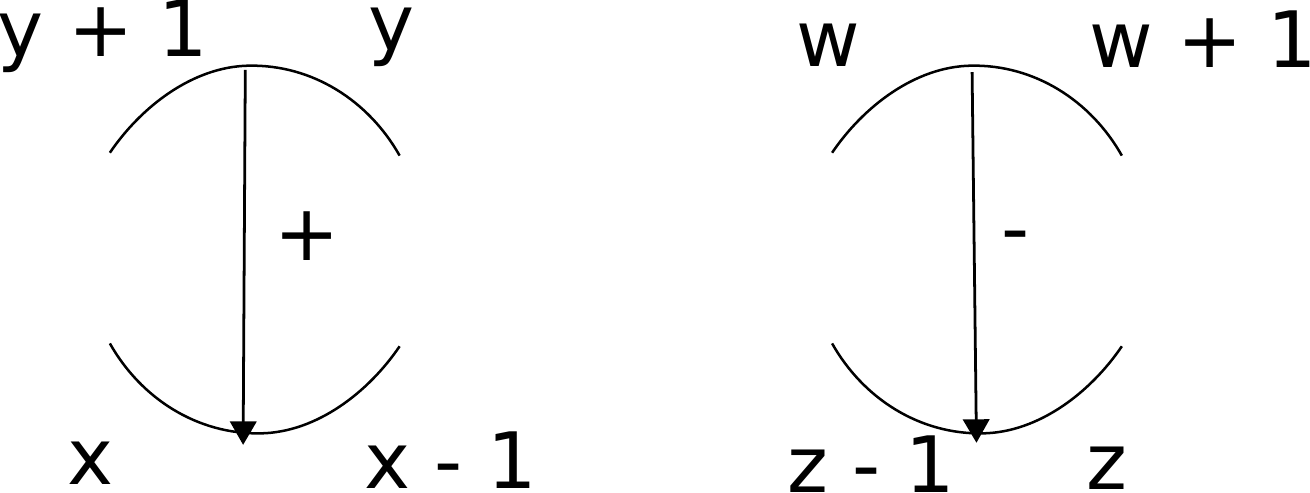}
\end{center}
\caption{Labeling rules for the Gauss Diagram}
\end{figure}

%\begin{figure}[h]
%\begin{center}
%\includegraphics[scale = 0.8]{}
%\end{center}
%\caption{Local labeling rules for the Gauss Diagram}
%\end{figure}

Refering to the Gauss diagram in Figure 18, the weight assigned to each crossing (chord) in the algebraic definition of the Affine Index Polynomial, $W_{\pm}(c)$, is defined as follows.

\[ 
W_{\pm}(c_i) = 
\begin{cases} 
x - (y+1) & \mbox{ if } sign(c_i) = +1\\
z - (w+1) & \mbox{ if } sign(c_i) = -1\\
\end{cases}
 \]

{\bf\underline{Section 1} The Linking Number Definition: Analyzing $W(L_c)$}\\
Recall that in the linking number definition of the Wriggle Polynomial, $W(L_c)$ equals the wriggle number of the sublink we get when smoothing at crossing c. \\

\centerline{$W(L_c) = lk_O(L_c) - lk_U(L_c) = \displaystyle\sum_{c \ \in \ Over}sign(c) - \sum_{c \ \in \ Under}sign(c)$}

\begin{figure}[h]
\begin{center}
\includegraphics[scale = 0.5]{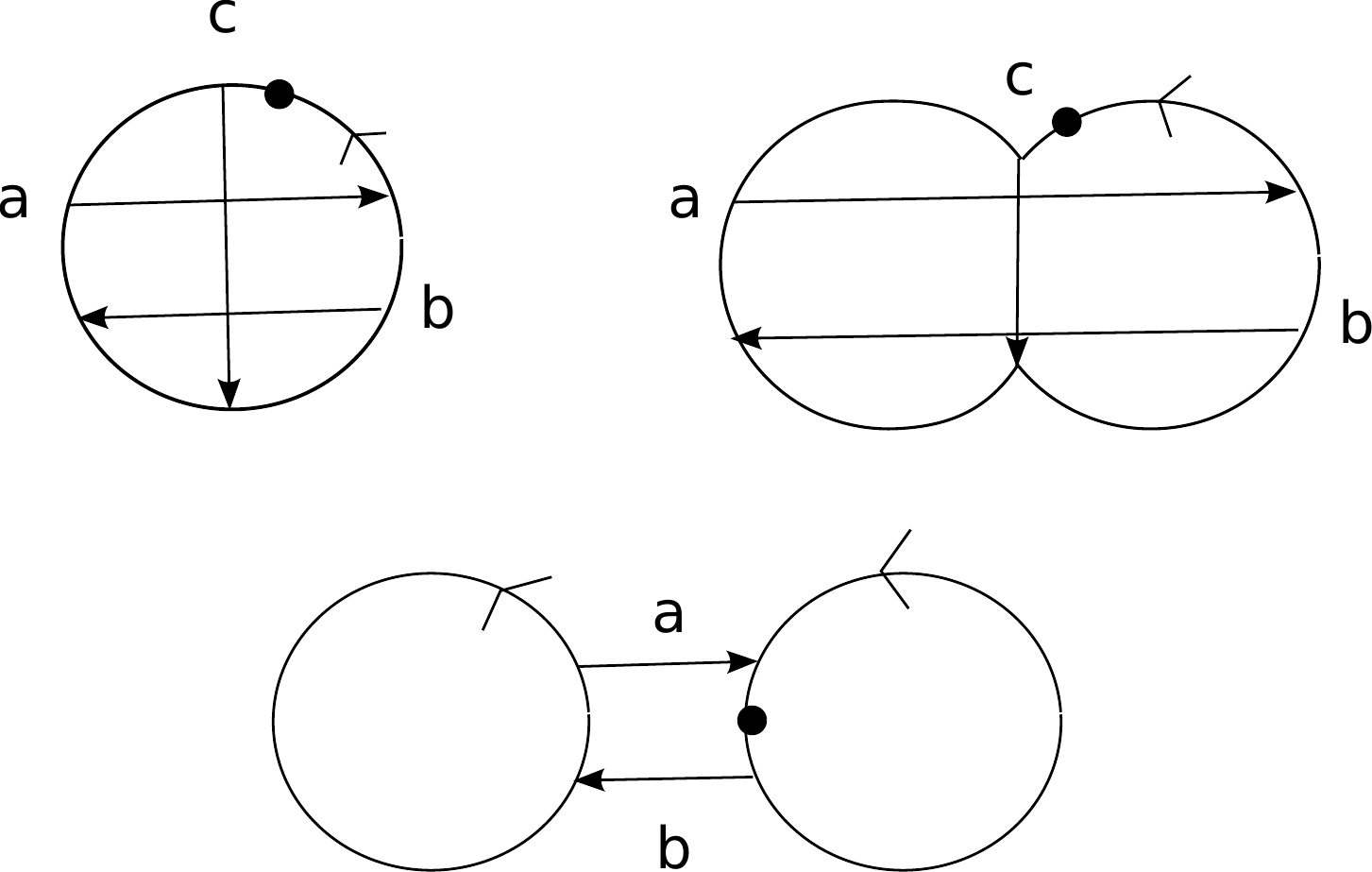}
\end{center}
\caption{Calculating the wriggle number from the Gauss Diagram}
\end{figure}

Consider how the calculation of $W(L_c)$ appears on the Gauss diagram.  Figure 19 expresses what happens on the Gauss diagram when we do an oriented smoothing of crossing c in a knot.  Traveling around the link component that has inherited the dot, we encounter ``linking chords", which are used in the calculation of the wriggle number.  In Figure 19, notice that these ``linking chords" are precisely the chords which crossed chord c before the smoothing occurred.  Thus, in the Gauss diagram, chords crossing chord c are the only chords which contribute to $W(L_c)$.\\

\begin{definition}
For a distinguished chord y in a Gauss Diagram, chords crossing y intersect it \underline{positively} and \underline{negatively} as in Figure 20.
\end{definition}

\begin{figure}[h]
\begin{center}
\includegraphics[scale = 0.4]{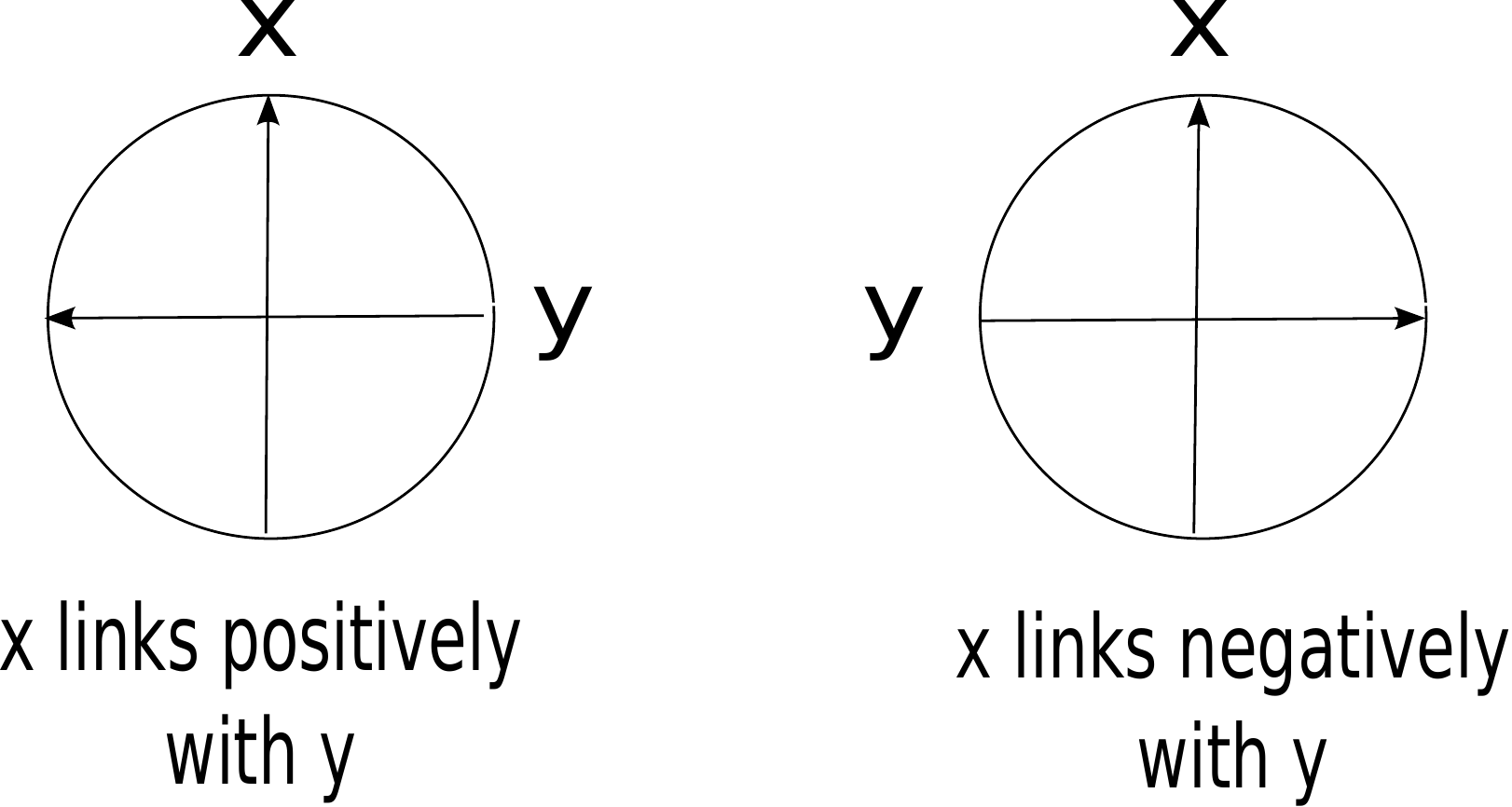}
\end{center}
\caption{Defining +/- intersection between 2 chords in a Gauss Diagram}
\end{figure}

For a distinguished chord c let:\\
\centerline{$P_c$ = \{ the set of chords which c intersects positively \} }
\centerline{$N_c$ = \{ the set of chords which c intersects negatively \} }

The definition of $W(L_c)$ can now be rewritten as:\\
\centerline{$W(L_c) = \displaystyle\sum_{c \ \in \ P_c}sign(c) - \sum_{c \ \in \ N_c}sign(c) $}
Using the absolute value of this definition of $W(L_c)$ to create a polynomial was first defined by A. Heinrich \cite{Hen}.\\

{\bf\underline{Section 2} The Algebraic Definition: Analyzing $W_{\pm}(c)$}\\
We now show that $W_{\pm}(c)$, in the algebraic definition of the Affine Index Polynomial,  can be rewritten the same way.  
\[
W_{\pm}(c) = \displaystyle\sum_{c \ \in \ P_c}sign(c) - \sum_{c \ \in \ N_c}sign(c)
\]
We begin with a lemma from Kauffman \cite{AIP}.\\

\begin{lemma}(Kauffman \cite{AIP})
When calculating the algebraic definition of the Affine Index Polynomial, labeling arcs in the following two ways results in equal values when calculating $W_{\pm}(c)$ for each crossing.\\
1) Arbitrarily choosing an arc to label 0, and then following the labeling schema in Figure 17.\\
2) Labeling an arc in the oriented knot diagram by the sum of the signs of crossings first encountered as ovecrossings after traveling around the the knot (starting from the chosen arc).\\
\end{lemma}

The labeling system defined in part 2 of the lemma was first defined by Z. Cheng \cite{OW} and a different weight was calculated.  As a consequence of this lemma, we may assume that each arc in the Gauss diagram is labeled by the the sum of signs of chords first encountered as over (i.e. the head of the chord) after traveling around the circle in the Gauss diagram.\\

A more detailed description of this alternate labeling method on the Gauss diagram is:\\

1) Pick an arbitrary point on the circle which lies in some arc, $a_i$.\\
2) From that point, travel around the circle counterclockwise.  Sum the signs of the chords where you encounter the over part of a crossing before the under part.  In other words, sum the signs of the chords where, when traveling around the circle, you encounter the head of the chord before you encounter the tail.\\
3) This sum is the integer you assign to the arc $a_i$.\\

For a distinguished chord c (c positive), place a dot on the incoming under-strand and look at the local labeling of c on the Gauss diagram.  Recall that for a positive crossing, $W_{\pm}(c) = x - (y+1)$\\

\begin{figure}[h]
\begin{center}
\includegraphics[scale = 0.4]{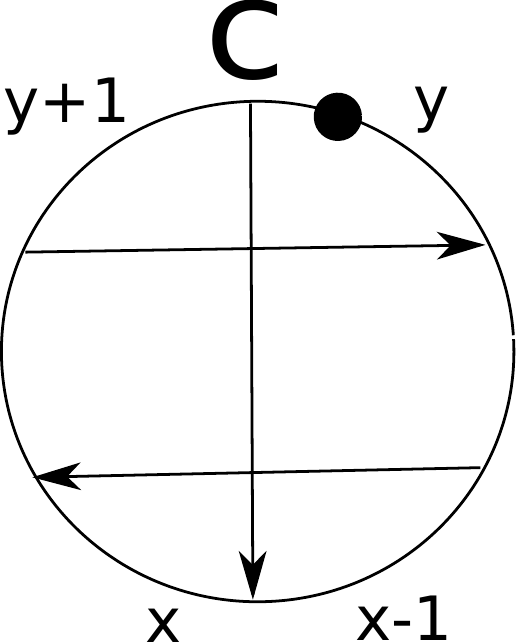}
\end{center}
\end{figure}

The number x comes from starting at that arc in the Gauss diagram, and traveling along the circle counterclockwise, and summing the signs of the crossing we encountered as overcrossings.  Similarly, the number y comes from starting at that arc in the Gauss diagram, traveling along the circle counterclockwise, and summing the signs of the overcrossings we encounter.  Figure 21 shows what types of chords we might have to consider when calculating x and y.\\

\begin{figure}[h]
\begin{center}
\includegraphics[scale = 0.6]{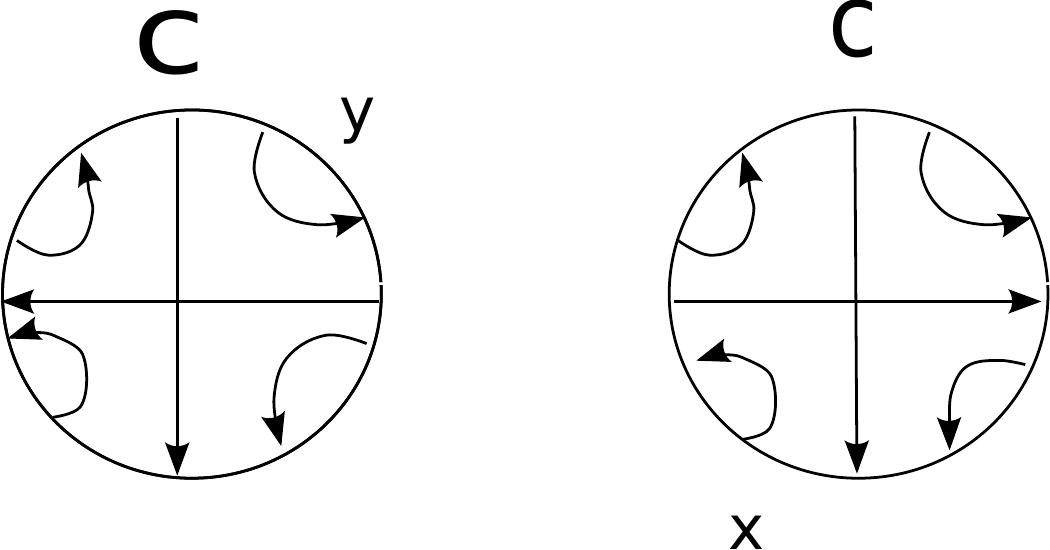}
\end{center}
\caption{Types of chords to consider when calculating x and y}
\end{figure}

It is clear that the set of chords we consider in each diagram that do not cross c are the same.  When taking the difference x - y we end up taking the difference of the the sums of signs of the the of the linking chords drawn in Figure 21.\\

Therefore we get the equation:\\
\centerline{$W_{\pm}(c) = \displaystyle\sum_{c \ \in \ P_c}sign(c) - \sum_{c \ \in \ N_c}sign(c)$}
Now we have obtained $W(L_c) = W_{\pm}(c)$.
\end{proof}

\begin{definition}
Call a labeling of the arcs around a crossing a \underline{classical labeling} if the arcs are labeled as follows
\end{definition}
\begin{figure}[h]
\begin{center}
\includegraphics[scale=0.5]{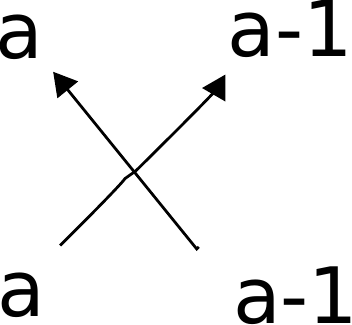}
\end{center}
\end{figure}

\begin{lemma}
A crossing c admits a classical labeling iff $W(L_c)$ = 0.
\end{lemma}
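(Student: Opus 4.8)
The plan is to reduce the statement to the weight computation that the preceding theorem already controls, and then to read the classical labeling off directly. Since the theorem ``The Wriggle Polynomial equals the Affine Index Polynomial'' supplies the equality $W(L_c) = W_{\pm}(c)$ at every crossing, it suffices to prove that a crossing $c$ admits a classical labeling if and only if its algebraic weight $W_{\pm}(c)$ vanishes. First I would fix a positive crossing and carry out the argument there, the negative case being identical after interchanging the roles of the two incident labels; using the arc labels $a$ and $b$ incident to $c$, we have $W_{+}(c) = a-(b+1)$. Because the arc labeling is determined only up to a single global additive shift, the individual labels are not intrinsic to $c$ but their difference $a-b$ is, and $W_{+}(c)$ is precisely a bookkeeping of this difference.

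For the ``if'' direction I would assume $W(L_c)=0$, so that $W_{+}(c) = a-(b+1)=0$, i.e. $a=b+1$. Substituting $a=b+1$ into the arcs incident to $c$ and propagating the remaining labels by the rule of Figure 17 reproduces exactly the configuration drawn in the definition of the classical labeling, so $c$ admits a classical labeling. For the ``only if'' direction I would start from a classical labeling of the arcs around $c$, read off the incident labels $a$ and $b$, and observe that the defining pattern forces them to differ by one, i.e. $a=b+1$; hence $W_{+}(c)=a-(b+1)=0$, and the theorem gives $W(L_c)=W_{+}(c)=0$. The negative crossing is handled the same way: the computation $W_{-}(c)=b-(a-1)=0$ again reduces to the condition that the two incident labels differ by one, which is what the classical pattern encodes, so no separate argument is needed.

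The one point that needs care is the precise reading of ``admits a classical labeling.'' Since the global labels around $c$ are pinned down up to an overall shift, the assertion that $c$ admits a classical labeling is really the assertion that the fixed label \emph{differences} at $c$ agree with those of the classical pattern, and that pattern is characterized by the single relation $a=b+1$. I expect the main obstacle to be purely this bookkeeping: verifying that matching the classical configuration of Figure 17 imposes exactly $a-(b+1)=0$ and neither more nor less, so that the equivalence with $W_{+}(c)=0$ is tight rather than merely implied in one direction. Once that is pinned down, the equality $W(L_c)=W_{\pm}(c)$ from the previous theorem closes both implications simultaneously.
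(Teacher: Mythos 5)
Your proposal is correct and takes essentially the same route as the paper, whose own proof is just the one-line assertion that the equivalence is ``clear from the definitions'': like you, it implicitly passes through the identification $W(L_c)=W_{\pm}(c)$ and reads the classical pattern as the condition $a-(b+1)=0$ (resp.\ $b-(a-1)=0$), with labels only meaningful up to a global shift. Your write-up simply makes that bookkeeping explicit.
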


\begin{corollary}
A crossing in a virtual knot cannot admit a cosmetic crossing change if it is odd or if it has a non-classical labeling.  Crossings that are even and have a classical labeling might admit cosmetic crossing changes.
\end{corollary}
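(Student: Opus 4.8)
The plan is to read the corollary as three separate assertions and dispatch each using the machinery already assembled: the Odd Wriggle Polynomial $\hat{W}_K(t)$, the full Wriggle Polynomial $W_K(t)$, and the preceding Lemma identifying a non-classical labeling with $W(L_c)\neq 0$. Throughout, let $K_+$ and $K_-$ denote two diagrams differing by a single sign change at the crossing $c$ in question, with $c$ positive in $K_+$ and negative in $K_-$, and write $k = W(L_c)$ computed in $K_+$. The first two assertions are obstruction statements (the change is not cosmetic), while the last is a non-obstruction statement (the invariant cannot rule the change out).

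First I would treat the case where $c$ is odd, using $\hat{W}_K(t)$. The key observation is that the parity of a crossing is a combinatorial feature of the chord diagram and is unaffected by switching a sign, so $K_+$ and $K_-$ have the same set of odd crossings and $\hat{W}$ is summed over the same index set for both. I would then reuse Claims 1 and 2 exactly as in the proof that odd virtual knots admit no cosmetic crossing change: Claim 1 gives $W(L_c)=-W(L_{\hat{c}})$ from Lemma 1, and Claim 2 forces $W(L_{c_i})=W(L_{\hat{c}_i})$ for every other odd crossing, so all terms except those of $c$ cancel. This yields
\[ \hat{W}_{K_+}(t)-\hat{W}_{K_-}(t)=t^{k}+t^{-k}, \]
which is never the zero Laurent polynomial (it equals $2$ when $k=0$ and is a genuine binomial otherwise). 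Hence $K_+\not\simeq K_-$, and no crossing change at an odd crossing is cosmetic --- notably \emph{without} any hypothesis on $k$, precisely because $\hat{W}$ carries no writhe normalization.

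Next I would handle a non-classical crossing using the full polynomial $W_K(t)$. The same cancellation of all non-$c$ terms applies verbatim, and now the normalization term contributes the writhe change $writhe(K_-)-writhe(K_+)=-2$, giving
\[ W_{K_+}(t)-W_{K_-}(t)=t^{k}+t^{-k}-2, \]
which vanishes precisely when $k=0$. By the preceding Lemma, $c$ carries a non-classical labeling exactly when $W(L_c)=k\neq 0$; in that case the displayed difference is nonzero, so again $K_+\not\simeq K_-$ and the change is not cosmetic.

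Finally, for the negative statement I would observe that when $c$ is both even and classically labeled we have $k=0$: then $c$ is omitted from $\hat{W}$ (it is even), and $W_{K_+}(t)-W_{K_-}(t)=t^{0}+t^{0}-2=0$, so both invariants fail to separate $K_+$ from $K_-$; the Wriggle Polynomial therefore cannot forbid a cosmetic change there, which is the precise meaning of ``might admit''. The main obstacle is not computational but a matter of bookkeeping: I must verify that Claims 1 and 2 genuinely localize to a single switched crossing in an \emph{arbitrary} virtual diagram (they invoke only Lemma 1 and the local over/under behavior at $c$, never the global hypothesis that all crossings are odd), and I must keep straight which polynomial to deploy --- the writhe-free $\hat{W}$ kills an odd crossing unconditionally, whereas $W$ detects a crossing only when $k\neq 0$.
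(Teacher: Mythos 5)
Your proposal is correct and follows essentially the same route as the paper: the odd case is handled by the Odd Wriggle Polynomial difference $t^{k}+t^{-k}$ (the machinery of Theorem 6, whose Claims 1 and 2 indeed never use the global hypothesis that all crossings are odd), the non-classical case by the full polynomial difference $t^{k}+t^{-k}-2$ together with the preceding Lemma identifying classical labelings with $W(L_c)=0$, and the ``might admit'' clause by observing both obstructions vanish when $k=0$ and $c$ is even. The only difference is that you spell out the localization of Claims 1 and 2 to a single switched crossing, which the paper leaves implicit.
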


\begin{proof} It is clear from the definitions that when calculating the wriggle polynomial, a crossing has wriggle number zero iff the labeling on the crossing is classical.
\end{proof}

\section{Results on Mutation}
The Affine Index Polynomial (aka the Wriggle Polynomial) has some success in detecting mutant virtual knots.  Recall that a Conway mutation of a knot cuts out a tangle, L, from the knot diagram and reglues that tangle after a horizantal flip, a vertical flip, or rotation by $180^{\circ}$.  These 3 types of mutation are shown in Figure 22.  

\begin{figure}[h]
\begin{center}
\includegraphics[scale=0.5]{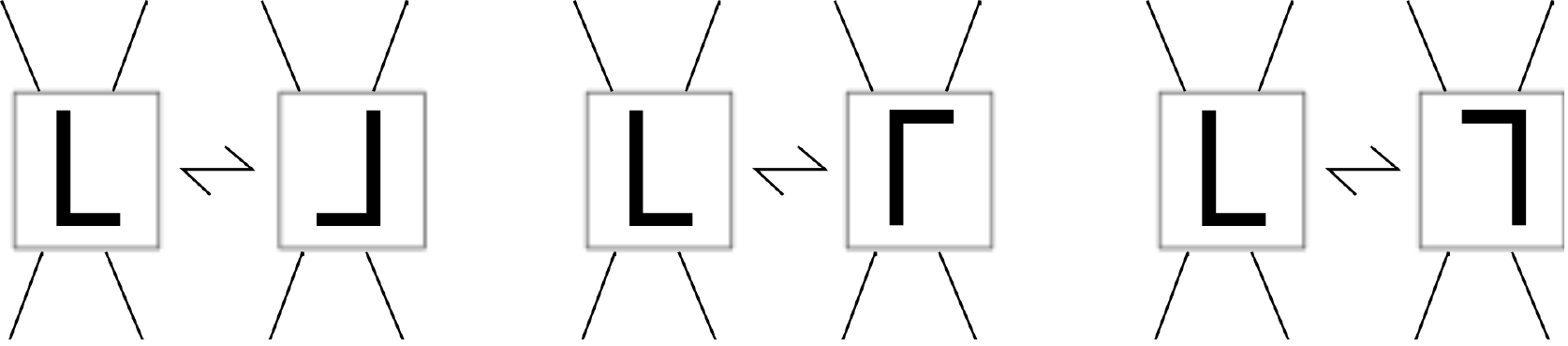}
\end{center}
\caption{Conway mutations on a tangle L within a knot diagram}
\end{figure}

\begin{definition}
A mutation is called \underline{\textit{positive}} if the orientation of the tangle matches after reglueing and is called \underline{\textit{negative}} if the orientation of the tangle needs to be reversed after reglueing.
\end{definition}

For more background on knots and mutations see \cite{Mut}.  After careful analysis of mutations up to isotopy, one sees that all positive Conway mutations done on a knot diagram can be realized by the 2 types of mutation in Figure 23.  We shall refer to them as \textit{positive rotation} and \textit{positive reflection}.\\

\begin{figure}[h]
\begin{center}
\includegraphics[scale=0.5]{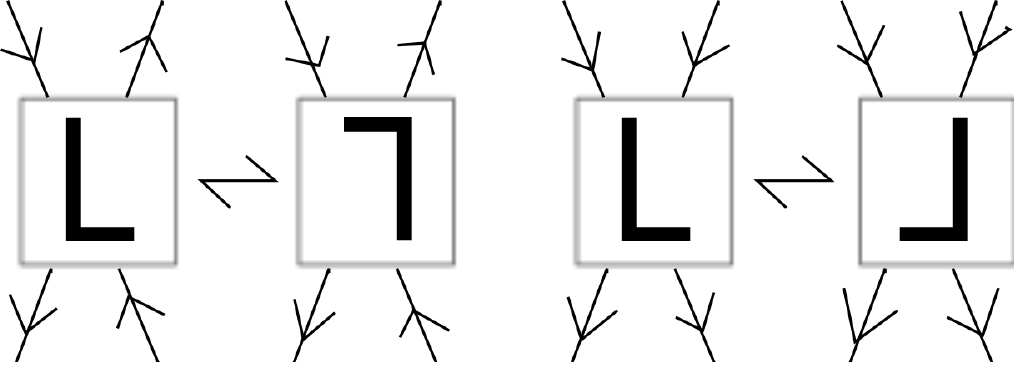}
\end{center}
\caption{Positive Rotation and Positive Reflection}
\end{figure}

We will illustrate the Affine Index Polynomial can detect mutations that are positive rotations with a family of examples.  We will then show that the polynomial is invariant under mutations that are postive reflections.  We leave the analysis of how the polynomial behaves under other types of mutation to another paper.\\

\subsection{Mutation by Positive Rotation}
We construct a family of examples that show the Affine Index Polynomial can detect mutation by positive rotation.  Consider the example in Figure 24 and Figure 25, where the mutation occurs in the dashed box.   $W_{K1}(t) = t + t^{-1} - 2$ and $W_{MK1}(t) = -t^4 + 3t - t^{-1} -1$.\\

\begin{figure}[htb]
\begin{minipage}[c]{0.38\textwidth}
\begin{tabular}{c|c|c|}
 & $W_+$ & $W_-$ \\\hline
A & 0 & 0\\\hline
B & +1 & -1 \\\hline
C1 & +1 & -1\\\hline
D & -1 & +1 \\\hline
E & -1 & +1\\\hline
\end{tabular}
\end{minipage}
\begin{minipage}{0.58\textwidth}
\includegraphics[scale = 0.4]{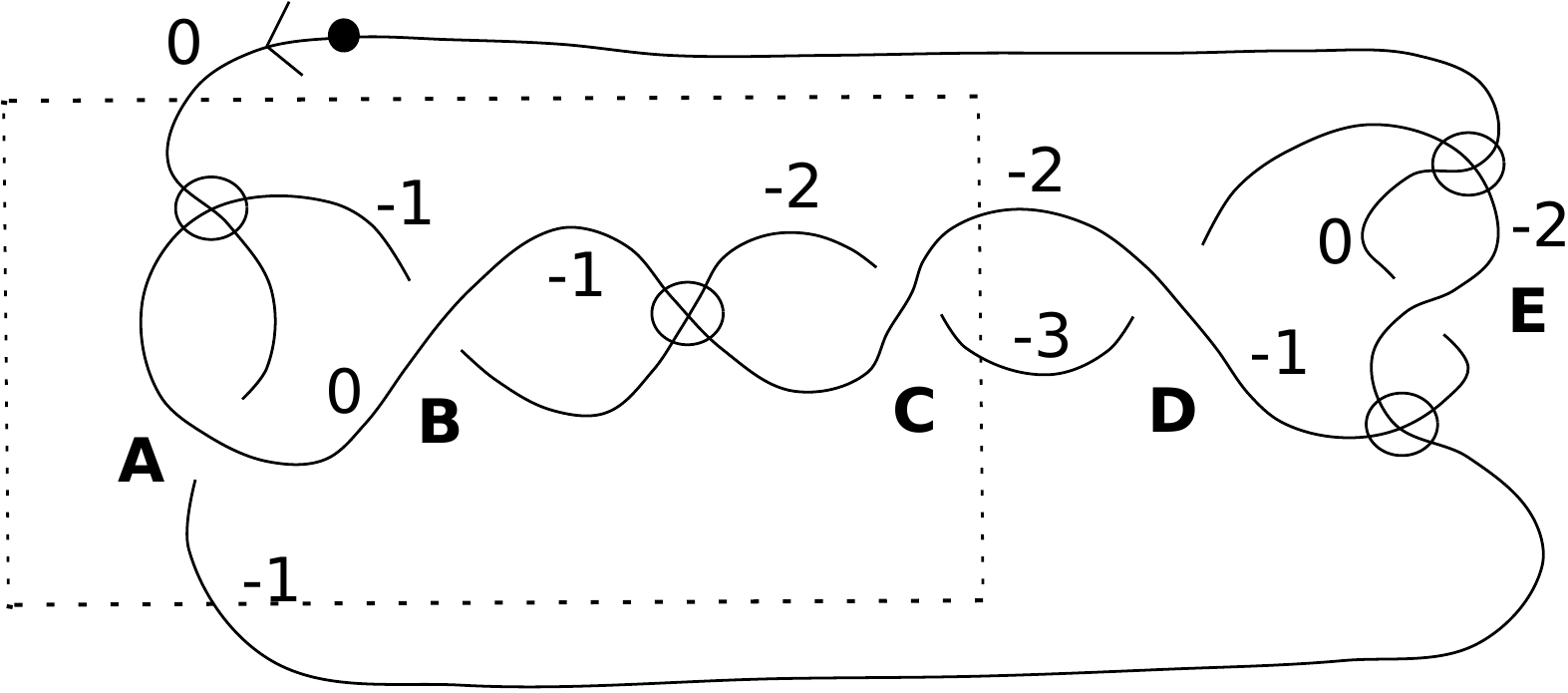}
\end{minipage}
\caption{Virtual Knot K1}
\end{figure}

\begin{figure}[htb]
\begin{minipage}[c]{0.38\textwidth}
\begin{tabular}{c|c|c|}
 & $W_+$ & $W_-$ \\\hline
A & -4 & +4 \\\hline
B & +1 & -1 \\\hline
C1 & +1 & -1 \\\hline
D & +1 & -1 \\\hline
E & +1 & -1 \\\hline
\end{tabular}
\end{minipage}
\begin{minipage}{0.58\textwidth}
\includegraphics[scale = 0.4]{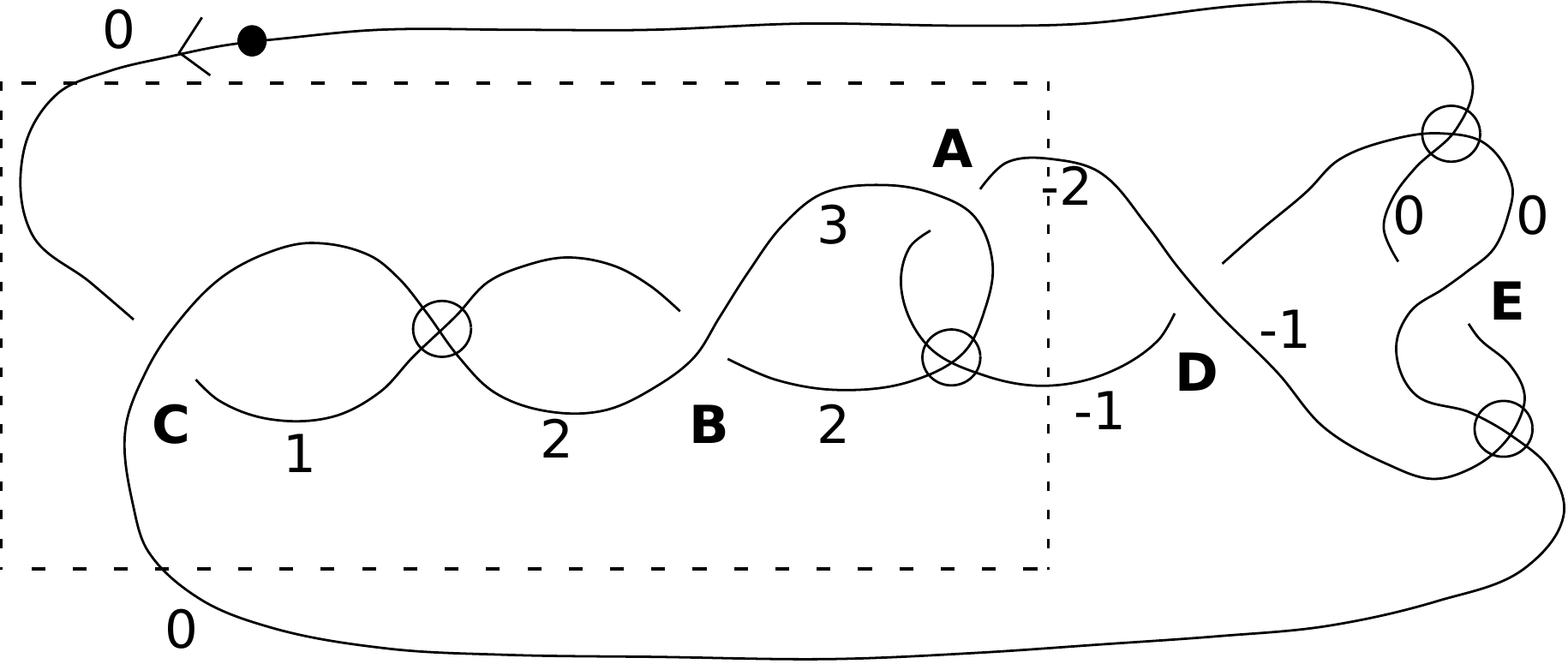}
\end{minipage}
\caption{Virtual Mutant Knot MK1}
\end{figure}

It is interesting to point out here that since the Wriggle Polynomial is equivalent to the Affine Index Polynomial, the Vassiliev Invariants coming from both polynomials are the same.  Recall that after substitution of $e^x$ the coefficients of $x^k$ in the wriggle polynomial are Vassiliev invariants of finite type $\displaystyle \lceil \frac{n}{2} \rceil$.  The n-th Vassiliev is given by the formula \cite{AIP}:\\
\[
v_n(K) = (1/n!) \sum_c sign(c) W(L_c)^n
\]
In classical knots, it has been shown that there are no Vassiliev Invariants of less than type 11 which distinguish mutant knots. We have just shown an example of a finite type 2 invariant that distinguishes the virtual mutant knots K1 and MK1.\\

In Figure 26 and Figure 27, we see K2 and its mutant, MK2 are also distinguished by this polynomial.  $W_{K2}(t) = t^{-3} + 3t$ and $W_{MK2}(t) = 2t + 2t^{-1}$.\\ 

\begin{figure}[htb]
\begin{minipage}[c]{0.38\textwidth}
\begin{tabular}{c|c|c|}
 & $W_+$ & $W_-$ \\\hline
A & -3 & +3\\\hline
B & +1 & -1 \\\hline
C1 & +1 & -1\\\hline
C2 & -1 & +1 \\\hline
D & +1 & -1 \\\hline
E & +1 & -1\\\hline
\end{tabular}
\end{minipage}
\begin{minipage}{0.58\textwidth}
\includegraphics[scale = 0.4]{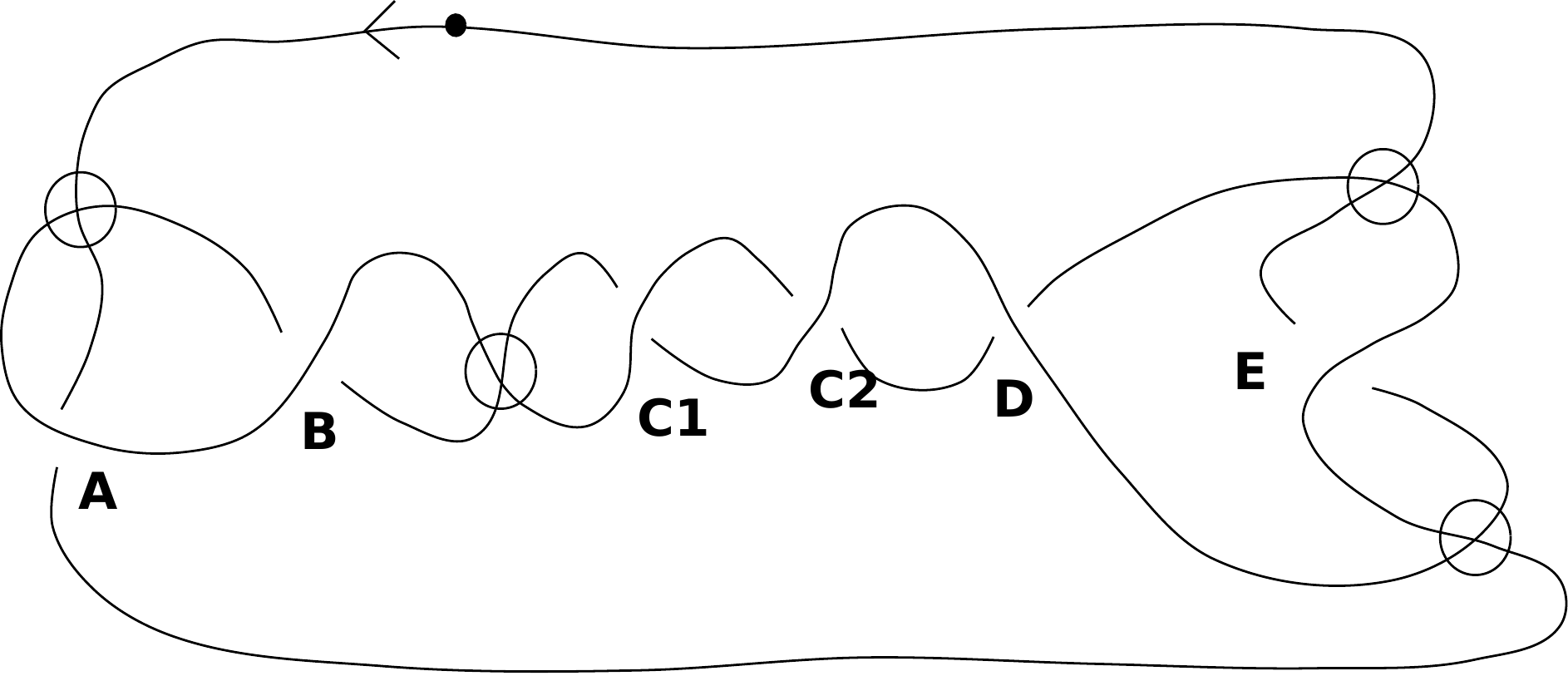}
\end{minipage}
\caption{Virtual Knot K2}
\end{figure}

\begin{figure}[htb]
\begin{minipage}[c]{0.38\textwidth}
\begin{tabular}{c|c|c|}
 & $W_+$ & $W_-$ \\\hline
A & +1 & -1  \\\hline
B & +1 & -1 \\\hline
C1 & +1 & -1 \\\hline
C2 & -1 & +1 \\\hline
D & -1 & +1 \\\hline
E & -1 & +1 \\\hline
\end{tabular}
\end{minipage}
\begin{minipage}{0.58\textwidth}
\includegraphics[scale = 0.4]{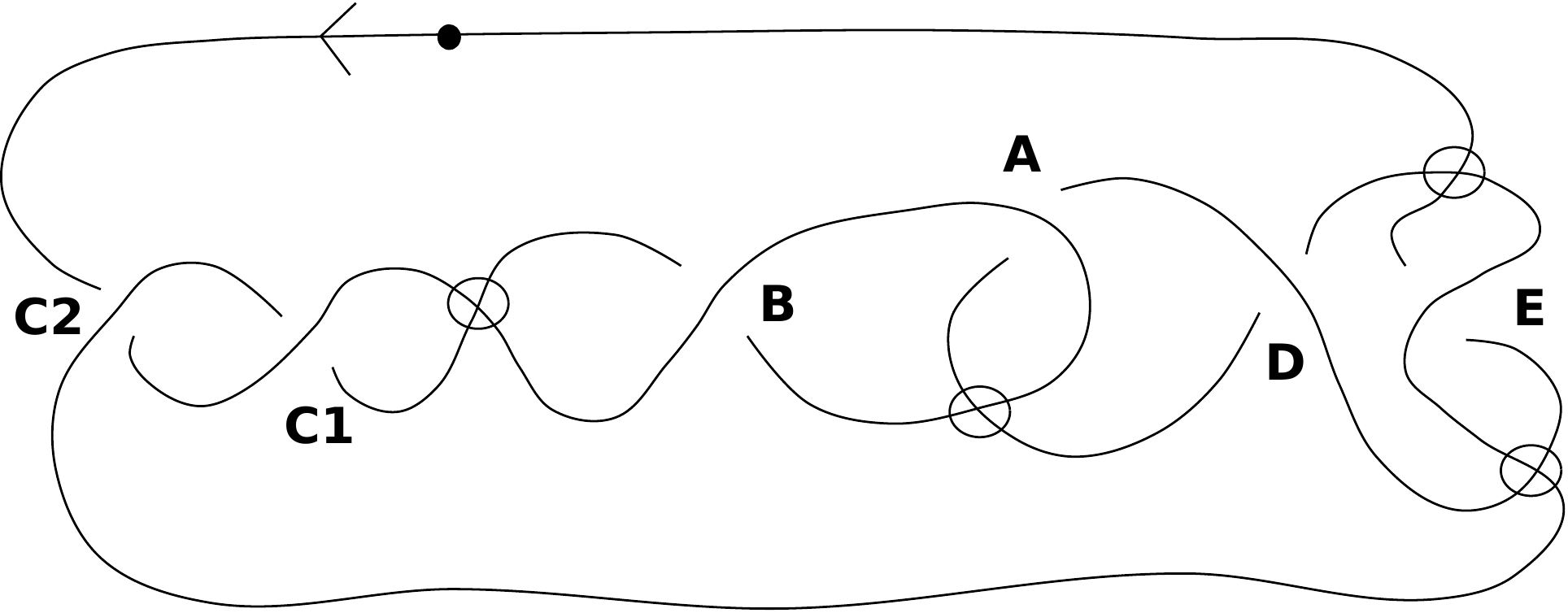}
\end{minipage}
\caption{Virtual Mutant Knot MK2}
\end{figure}

 We can create a family of examples, Kn, that can be distinguished from MKn by this polynomial by putting n-many twists at the crossing labeled C in K1 and C2 in K2.  Let us observe how the crossing replacement in Figure 28 for a knot diagram K affects $W_K(t)$.  Call the crossing on the left C.  C contributes the term $sign(C)t^{W(L_C)}$ to the polynomial.  The 3 twists on the right contribute $sign(C)[2t^{W(L_C)} + t^{-W(L_C)} - 2]$ to the polynomial without affecting the other terms.  By starting with knots K1/MK1 and K2/MK2, we can continue adding twists and recursively find the value of the polynomial for Kn/MKn.\\

\begin{figure}[h]
\begin{center}
\includegraphics[scale =0.5]{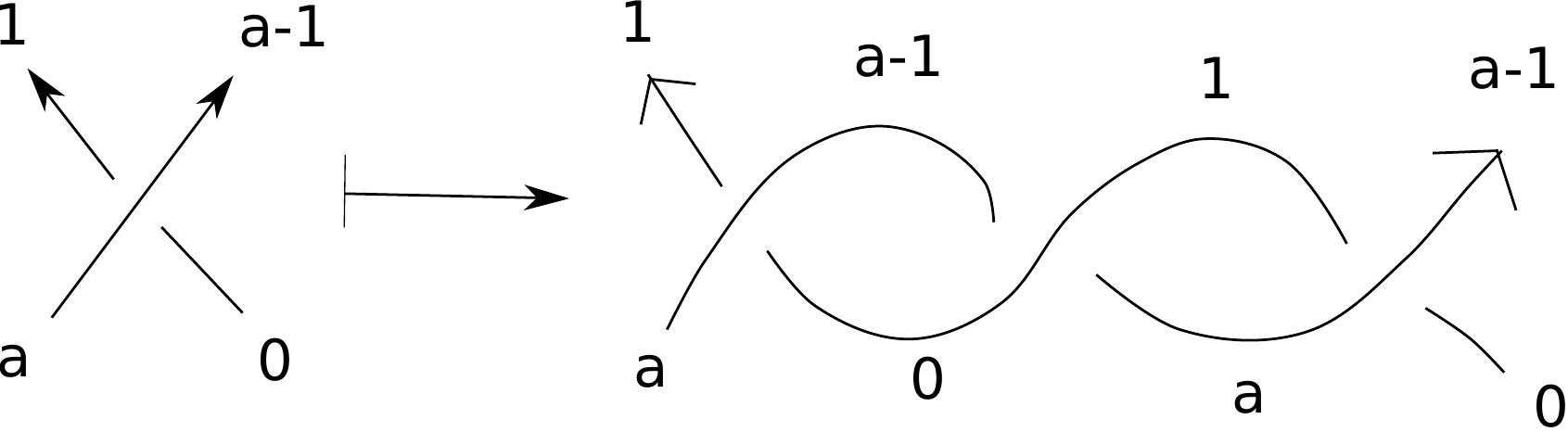}
\caption{A particular crossing replacement}
\end{center}
\end{figure}

\begin{center}
\begin{tabular}{c|c|c}
 & $W_{Kn}(t)$ & $W_{MKn}(t)$ \\\hline
n = 1 & $t + t^{-1} - 2$ & $-t^4 + 3t - t^{-1} - 1$ \\\hline
n = 3 & $2t + 2t^{-1} - 4$ & $-t^4 + 4t - 3$\\\hline
n = 5 & $3t + 3t^{-1} - 6$ & $-t^4 + 5t + t^{-1} - 5$\\\hline
n odd & $\displaystyle\frac{n+1}{2}t + \frac{n+1}{2}t^{-1} - (n+1)$ & $-t^4 + \displaystyle\frac{n+5}{2}t + \frac{n-3}{2}t^{-1} - n$ \\
\end{tabular}

\bigskip

\begin{tabular}{c|c|c}
 & $W_{Kn}(t)$ & $W_{MKn}(t)$ \\\hline
n = 2 & $t^{-3} + 3t$ & $2t + 2t^{-1}$ \\\hline
n = 4 & $t^{-3} + 4t + t^{-1} - 2$ & $ 3t + 3t^{-1} - 2$\\\hline
n = 6 & $t^{-3} + 5t + 2t^{-1} - 4$ & $4t + 4t^{-1} - 4$ \\\hline
n even & $t^{-3} + \displaystyle\frac{n+4}{2}t + \frac{n-2}{2}t^{-1} - (n-2)$ & $\displaystyle\frac{n+2}{2}t + \frac{n+2}{2}t^{-1} - (n-2)$ \\
\end{tabular}
\end{center}

Therefore, the family of mutants in Figure 29 is distinguished by the Wriggle Polynomial.\\

\begin{figure}[h]
\begin{minipage}[c]{0.5\textwidth}
\includegraphics[scale = 0.3]{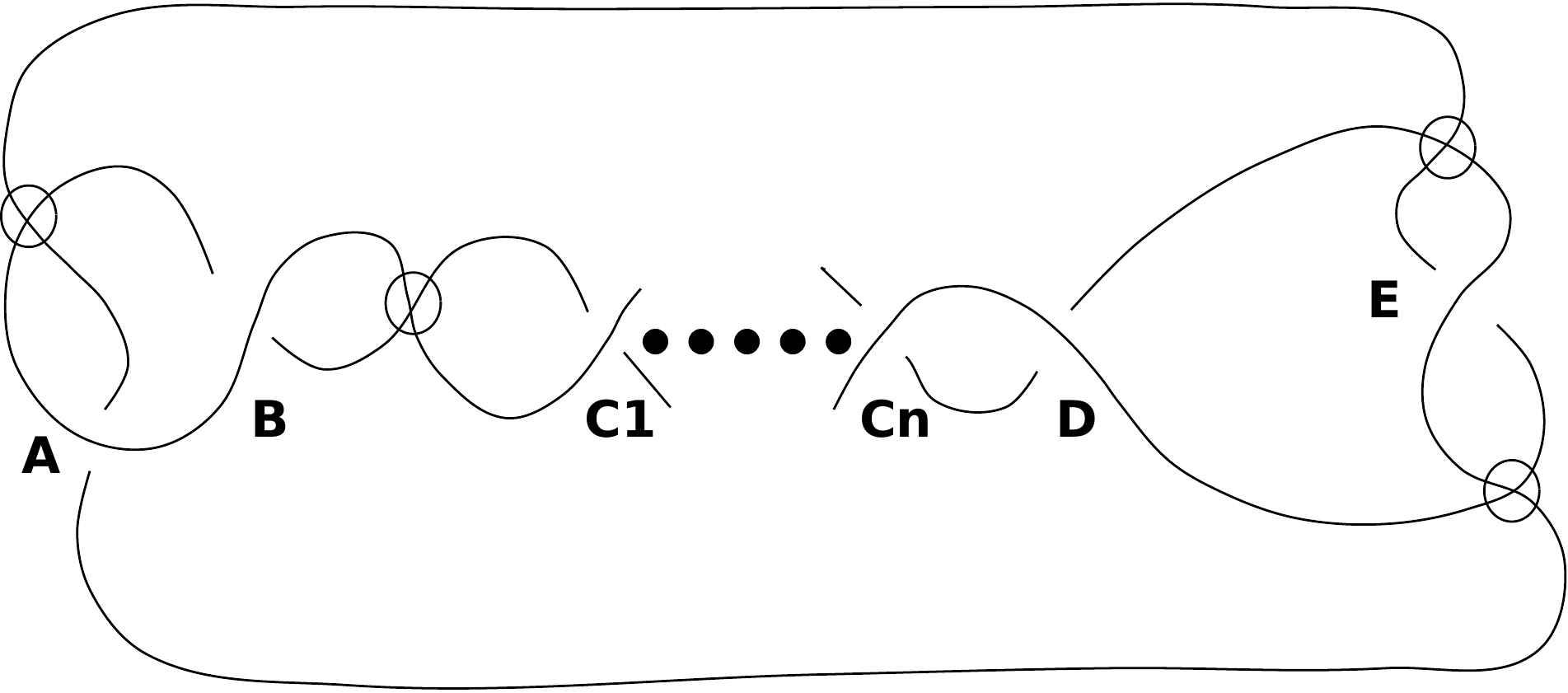}
\end{minipage}
\begin{minipage}{0.5\textwidth}
\includegraphics[scale = 0.3]{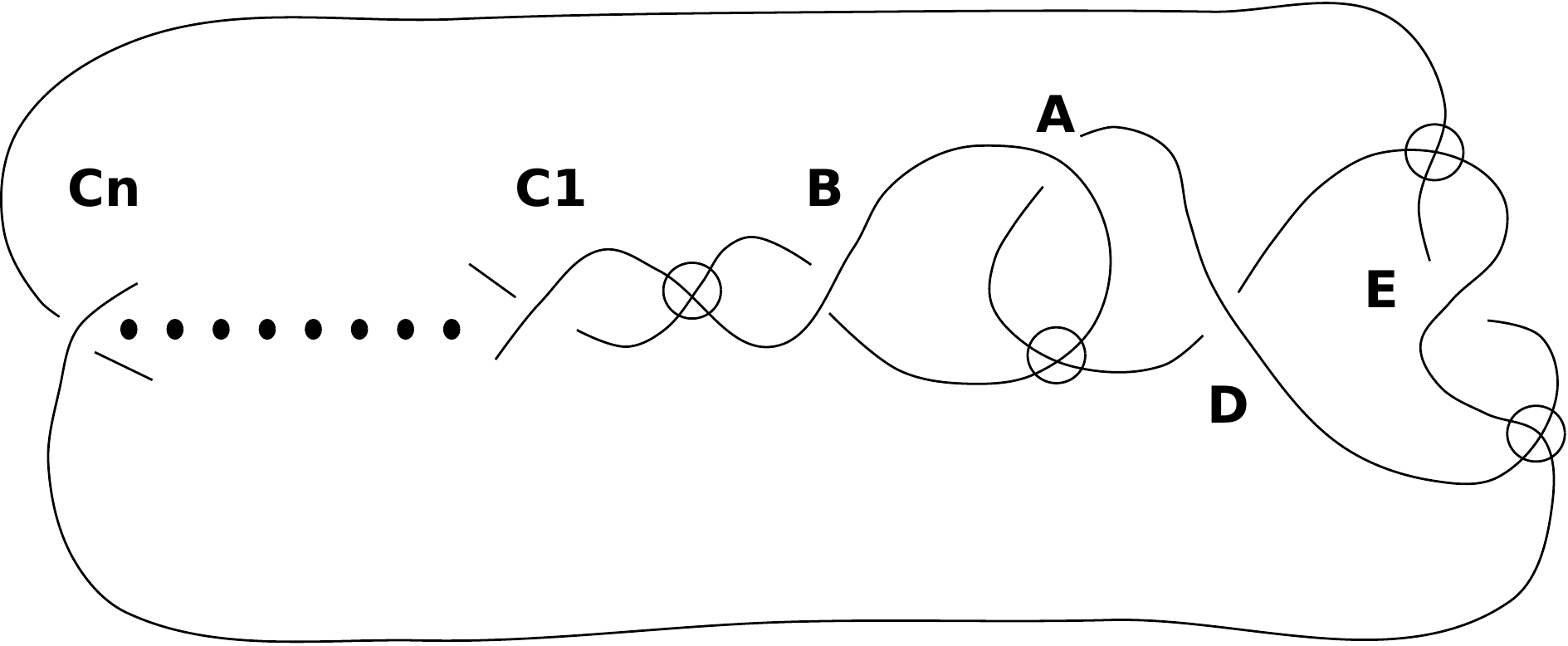}
\end{minipage}
\caption{The knots Kn  and MKn (before and after mutation)}
\end{figure}

We leave the classification of the conditions under which the Affine Index Polynomial detects mutation by positive rotation to another paper.

\subsection{Mutations and the Gauss code}

In the proof that the Affine Index Polynomial does not detect certain types of mutations we find ourselves analyzing the Gauss code of a knot diagram before and after a mutation occurs.  This requires us to translate the definition of the Affine Index Polynomial into a different format - the Gauss code.  Notice that given a signed Gauss code for a knot K, the letters represent crossings, and the spaces between the letters represent arcs.  Figure 30 shows how a labeling of arcs on a knot diagram can be incorporated in the Gauss code.  \\

\begin{figure}[h]
\begin{center}
\begin{minipage}{0.3\textwidth}
\includegraphics[scale=0.4]{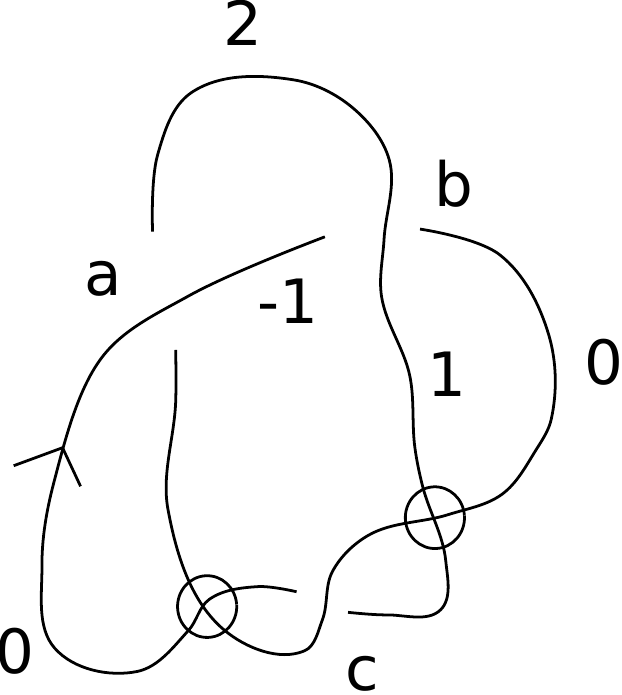}
\end{minipage}
\begin{minipage}{0.3\textwidth}
\begin{tikzpicture}  
  \node[anchor=base] at (1,2)
                {$a_{o+}  \ \ b_{u+} \ \ c_{o-} \ \ a_{u+} \ \ b_{o+} \ \ c_{u-} $};
  \node at (-1.6,2.3) {0};
  \node at (-0.8,2.3) {-1};
  \node at (0.1,2.3) {0};
  \node at (0.9,2.3) {1};
  \node at (1.7 , 2.3) {2};
  \node at (2.5 , 2.3) {1};
  \node at (3.3 , 2.3) {0};
  \end{tikzpicture}
\end{minipage}
\end{center}
\caption{Virtualized Trefoil with arcs labeled in Gauss code}
\end{figure}

Calculating the Affine Index Polynomial requires calculating a weight for each crossing.  In the Gauss code, the rule for calculating the weight at each crossing translates to taking ``outside differences" or ``inside differences" of arc labels, from the overcrossing to the undercrossing, as we shall explain below.  Figures 31 and 32 illustrate how the labeling rule and the weight calculation from the Affine Index Polynomial are interpreted on the Gauss code.\\

\begin{figure}[h]
\begin{center}
\begin{minipage}{0.4\textwidth}
\includegraphics[scale=0.5]{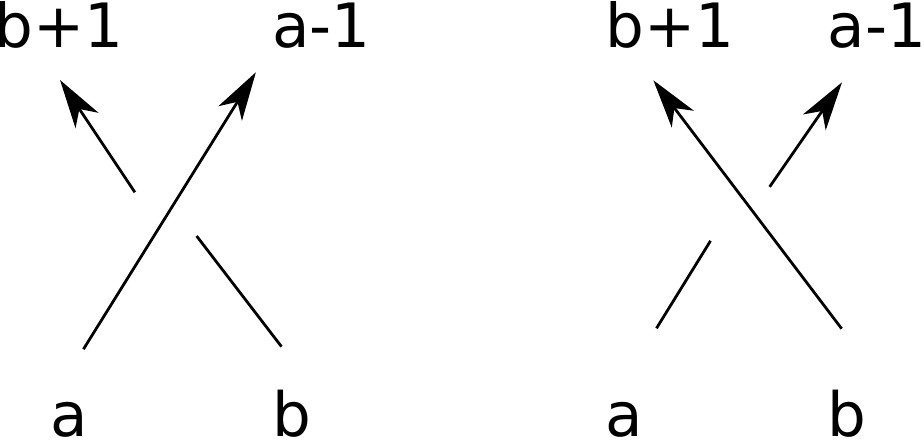}
\end{minipage}
\begin{minipage}{0.5\textwidth}
\begin{tabular}{|c|c|}
\hline
Traveling right of... & Arc label changes by ... \\\hline
o+ & -1 \\\hline
u+ & +1 \\\hline
o- & +1 \\\hline
u- & -1 \\\hline
\end{tabular}
\end{minipage}
\end{center}
\caption{Interpreting the labeling rule on the Gauss code}
\end{figure}

\begin{figure}[h]
\begin{minipage}{0.3 \textwidth}
\[
W_-(c) = b - (a - 1)
\]
\[
W_+(c) = a - (b+1)
\]
\end{minipage}
\begin{minipage}{0.2 \textwidth}
\[
... {}^b c_{o-} ... c_{u-}^{a-1}  ... 
\]
\[
... {}^a c_{o+} ... c_{u+}^{b+1} ...
\]
\end{minipage}
\begin{minipage}{0.3 \textwidth}
\[
W_{\pm}(c) = [(\text{\# before going over}) - (\text{\# after going under})]
\]
\[
\ \ \ \ \ \ \ \ \ \ \ \ \ \ = \text{Outside Differences} = \text{Inside Differences}
\]
\end{minipage}
\caption{Interpreting the weight calculation on the Gauss code}
\end{figure}

As an example, let us calculate the weight of each crossing in Figure 30 from the Gauss code.  Recall that we use the outside/inside differences rule going from the overcrossing to the undercrossing.  The weight of crossing a is (0 - 2) = -2 when calculated using outside differences, and (-1 - 1) = -2 when calculated using inside differences.  The weight of crossing b is (2 - 0) = 2 when calculated using outside differences and (1 - -1) = 2 when calculated using inside differences.  Notice that if we instead use the outside/inside differences rule from the undercrossing to the overcrossing, we get the same number with opposite sign.

\begin{definition}
A \underline{\textit{block}} of letters in a Gauss code is a connected subset of the Gauss code.  In Figure 31, bca is a block, while ac is not a block.
\end{definition}

\begin{definition}
Considering a particular block of letters in the Gauss code, a crossing in that block can be labeled as \underline{\textit{homebody}} or \underline{\textit{traveler}}.  A \underline{\textit{homebody crossing}} appears twice in the block of the Gauss code.  A \underline{\textit{traveler crossing}} appears once in the block of the Gauss code.
\end{definition}

\begin{definition}
A \underline{mutant block} in the Gauss code is a block of crossings affected/altered after a tangle in the knot undergoes a mutation.
\end{definition}

Conway mutations of a tangle in a knot diagram affect exactly 2 blocks of crossings in the Gauss code.  This is because when traversing the knot diagram, we enter (and leave) the tangle being mutated exactly twice.  Both times we enter and leave the tangle that we want to mutate, our travels in the tangle denote a block of letters in the Gauss code.

\begin{lemma}
A mutation by positive reflection changes the Gauss Code of the knot by:\\
1) Exchanging the 2 mutant blocks\\
2) Switching all over/under information associated to each occurance of a crossing.
\end{lemma}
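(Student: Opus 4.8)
The plan is to track the knot's traversal through the mutated tangle directly on the Gauss code. As already noted above, a Conway mutation acts on a $2$-tangle $T$ that the oriented knot enters and exits exactly twice, so the Gauss code splits into two blocks, $B_1$ and $B_2$, recording the two arcs of the knot that pass through $T$; the letters coming from crossings outside $T$ are untouched by the mutation and merely sit between and around these blocks. It therefore suffices to determine how the positive reflection of Figure 23 transforms $B_1$ and $B_2$.

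First I would isolate the two geometric effects of the reflection. A positive reflection is realized by flipping $T$ about an axis lying in the plane of the diagram, so it reverses the height function ($z \mapsto -z$). Consequently every crossing inside $T$ has its over- and under-strands interchanged, while crossings outside $T$ are unchanged; this gives part $2$ of the statement for exactly the crossings recorded in $B_1$ and $B_2$ (the homebody and traveler crossings). The same flip also permutes the four endpoints of $T$, and I would check that for the axis making the mutation positive (orientation-matching, as in Figure 23) this permutation is a reflection that interchanges the two through-strands of $T$ with one another.

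Next I would follow the oriented knot through the reglued tangle. Because the external arcs and their orientations are fixed, the knot re-enters $T$ at the same boundary points as before; but the flip has interchanged the two internal strands, so the strand that previously produced $B_2$ is now traversed where $B_1$ used to be read off, and vice versa. Since a reflection of the disk preserves the inward/outward (orientation) type at each boundary point — this is precisely the positivity condition — each block is read in the same internal order as before, with only its over/under data reversed. Combining this with the first step yields part $1$: the two blocks are exchanged, and part $2$ holds simultaneously.

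The main obstacle is the bookkeeping in this last step: one must pin down, for the positive reflection of Figure 23, exactly which endpoint permutation occurs and confirm that it preserves the in/out type at each endpoint while swapping the two through-strands. This depends on the connectivity pattern of $T$ inside the knot and on the choice of reflection axis, so I would dispatch it by a short case check over the admissible tangle connectivities, in each case confirming that the orientation-matching reflection produces a clean exchange of blocks together with a global over/under switch, and — importantly — does not reverse the order of the letters within either block.
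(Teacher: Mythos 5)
Your proposal is correct, and in fact it supplies considerably more than the paper does: the paper's entire proof of this lemma is the single sentence ``The proof follows after careful analysis of how mutation affects the Gauss code,'' followed by the worked example of Figure 33. Your argument is exactly the careful analysis being alluded to, and it isolates the two mechanisms in the right way: the flip about an in-plane axis negates the height function, which switches every over/under designation inside the tangle while preserving crossing signs (reflection of the planar diagram and over/under reversal each negate the sign, so together they cancel, consistent with the signs being unchanged in the paper's example); and the induced permutation of the four tangle endpoints interchanges the two through-strands, which exchanges the two blocks. Your final caveat is also the genuinely load-bearing point: for some connectivities (e.g.\ both strands running between adjacent endpoint pairs) one of the two in-plane reflection axes fixes each through-strand setwise, but that reflection reverses the strand orientations at the boundary and is therefore a \emph{negative} mutation; the positivity hypothesis is precisely what forces the surviving reflection to swap the strands while preserving the direction in which each is traversed, so that each block keeps its internal letter order. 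Carrying out that short case check over the admissible connectivities, as you propose, completes the proof.
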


\begin{proof} The proof follows after careful analysis of how mutation affects the Gauss code.
\end{proof}

To illustrate the lemma, consider the tangle in Figure 33.\\

\begin{figure}[h]
\begin{center}
\includegraphics[scale=0.5]{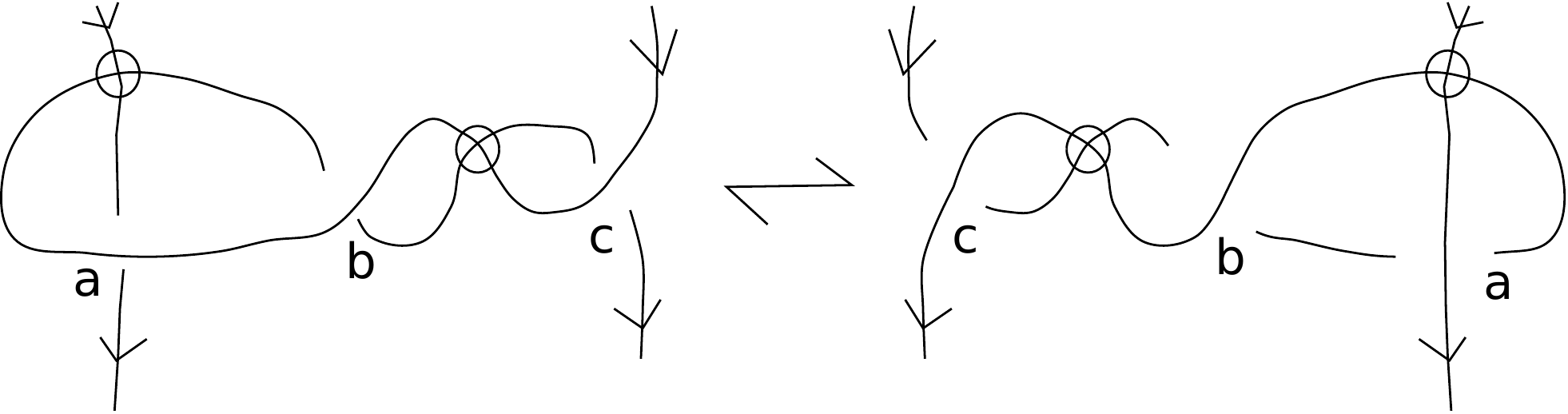}
\end{center}
\caption{An example of a tangle undergoing mutation by positive reflection}
\end{figure}

In Figure 33, let K be the name of the knot containing the tangle on the left, and MK be the name of the knot containing the tangle on the right (the tangle which has just undergone mutation).  Observe how the mutant blocks in the Gauss code have altered, illustrating the lemma.  These blocks have been underlined in the Gauss code for clarity.  The crossings outside the mutant blocks remain unchanged.
\[
\text{Gauss code for K} = \underline{a_{u+}} \dots \underline{c_{o+}b_{o+}a_{o+}b_{u+}c_{u+}} \dots
\]

\[
\text{Gauss code for MK} = \underline{c_{u+}b_{u+}a_{u+}b_{o+}c_{o+}} \dots \underline{a_{o+}} \dots
\]

This type of mutation exchanges the mutant blocks and switches the over/under designation associated with each instance of a crossing.

\subsection{Mutation by Positive Reflection}

\begin{theorem}
The Affine Index Polynomial does not detect mutation by positive reflection.
\end{theorem}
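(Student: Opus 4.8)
The plan is to show that positive reflection preserves the entire weighted-crossing data of the diagram, namely the multiset $\{(\mathrm{sign}(c), W_\pm(c))\}_{c \in C}$ together with the writhe; since $W_K(t) = \sum_c \mathrm{sign}(c)\, t^{W_\pm(c)} - \mathrm{writhe}(K)$, this forces $W_K(t) = W_{MK}(t)$. By Lemma 7 the mutation acts on the cyclic Gauss code $K = B_1 X_1 B_2 X_2$ (two mutant blocks $B_1, B_2$ separated by the two outside arcs $X_1, X_2$) by producing $MK = B_2' X_1 B_1' X_2$, where the prime denotes toggling every over/under designation inside a block while keeping its internal order and keeping every crossing sign. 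In particular no crossing changes sign, so the writhe is preserved at once, and it remains only to match each crossing of $K$ with the like-named crossing of $MK$ and prove that their weights agree.

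First I would use the interleaving form of the weight already extracted in the proof of the equivalence of the two definitions, $W_\pm(c) = \sum_{d \in P_c}\mathrm{sign}(d) - \sum_{d \in N_c}\mathrm{sign}(d)$: a signed sum over the chords $d$ crossing the chord $c$ in the Gauss diagram, the sign being the intersection sign $\epsilon(c,d)$ determined by the arrows of $c$ and $d$ (tail to head, i.e.\ under to over) and the cyclic order of their four endpoints. Thus the weight of $c$ is governed entirely by which chords interleave $c$ and by those intersection signs, and the problem reduces to showing that both are unaffected by the mutation. Note that toggling over/under reverses a chord's arrow but leaves $\mathrm{sign}(d)$ (the crossing sign) untouched, so $\mathrm{sign}(d)$ is never an issue and the whole burden falls on interleaving and on $\epsilon$.

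Then I would run the case analysis according to whether $c$ is a homebody, a traveler, or lies outside both blocks, splitting further on the type of each crossing $d$ that could interleave it. The structural facts to isolate are: a chord with both occurrences outside, or both in one block, moves only as a rigid unit, whereas a traveler has its two endpoints exchanged between the two block slots; and the over/under toggle reverses arrows. The decisive observation is that for a traveler the arrow reversal and the slot exchange compose to leave its head-slot and tail-slot unchanged, so at the slot level a traveler is effectively unmoved, while a homebody merely reverses its arrow inside its relocated block. From these one verifies pair by pair that interleaving is preserved (for instance two travelers interleave iff their endpoints have matching internal order in both blocks, a condition visibly symmetric under the block swap) and that $\epsilon(c,d)$ is preserved (two homebodies in the same block, or two travelers, each reverse both arrows while retaining their cyclic order, so $\epsilon$ is fixed).

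The main obstacle will be the invariance of $\epsilon(c,d)$ in the mixed pairs — homebody with traveler, and traveler with outside — where exactly one of the two chords appears to reverse its arrow. Here the naive tally of arrow reversals is misleading, since the block swap simultaneously relocates endpoints; the point to verify is that the resulting change in the cyclic order of the four endpoints exactly compensates the lone arrow reversal, leaving $\epsilon(c,d)$ fixed. I expect to dispatch this cleanly by recording, in each subcase, the slot-level cyclic order of the four endpoints together with their head/tail data and comparing $K$ with $MK$; a short direct placement of the endpoints on the circle confirms the sign each time. Assembling all cases shows every weight is preserved, and, the writhe being already fixed, we conclude $W_K(t) = W_{MK}(t)$, so the Affine Index Polynomial cannot detect mutation by positive reflection.
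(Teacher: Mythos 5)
Your proposal is correct, and it reaches the conclusion by a genuinely different route than the paper. The paper never leaves the arc-label formulation: it records the net change $\Delta_i$ of the Cheng-style arc labels over each of the four blocks of the Gauss code, proves that the over/under toggle negates the net change of each mutant block while $\Delta_1=-\Delta_3$ and $\Delta_2=-\Delta_4$, and then verifies that $W(c)$ is unchanged for the three crossing types (outside, homebody, traveler) by computing it as an inside or outside difference of arc labels, choosing whichever reading direction makes the block sums visibly telescope. You instead work with the chord-interleaving formula $W_\pm(c)=\sum_{d\in P_c}\operatorname{sign}(d)-\sum_{d\in N_c}\operatorname{sign}(d)$ already extracted in Section~5, so the whole theorem reduces to showing that positive reflection preserves which chords interleave $c$ and with what intersection sign; your key observation that for a traveler the block swap and the arrow reversal cancel at the slot level is exactly the fact that makes every mixed case close (for instance, for two interleaving travelers with slot-$1$ positions $p,r$ and slot-$3$ positions $q,s$, the interleaving condition itself forces $r>p\iff s>q$, which is what rescues the sign, and for a homebody paired with a traveler the reversed arc of the homebody picks out the traveler's other endpoint precisely when the traveler's arrow flips). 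The paper's route buys fewer cases (three, plus two short $\Delta$-lemmas) and avoids any discussion of intersection signs; yours buys independence from the arc-label machinery and a statement purely about the Gauss-diagram chord data, at the price of roughly ten pair-cases, each of which still has to be written out explicitly (as you acknowledge) before the argument is complete --- but every one of them does check out.
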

\begin{proof} 
We will show that for any crossing c in the Gauss code, the weight associated to that crossing, W(c), does not change after the Gauss code undergoes mutation by positive reflection.  By Lemma 4, since the sign of c does not change after this type of mutation, we conclude that the Affine Index Polynomial does not change.\\  

We show W(c) remains invariant under mutation by positive reflection by analyzing the \textit{net differences} in arc labels of blocks of crossings in the Gauss code.  Consider Figure 34 - the Gauss code of a knot before and after this type of mutation, with the arcs labeled with the rules from Figure 32.  In Figure 34, $m_i$ and $\hat{m}_j$ represent crossings in the mutant tangle and $x_i$ and $y_j$ represent crossings outside the mutant tangle.  The line segment $\Delta_i$ represents the net change in the arc labels between the points denoted by the line segment.\\

\begin{figure}[h]
\begin{center}
\begin{tikzpicture}  
\node[anchor=base] at (-5, 0.9)
{Knot before mutation};  
  \node[anchor=base] at (1,0.7)
                {$m_1 m_2 \ldots m_{k_1} x_1 \ldots x_{k_2} \hat{m}_1 \ldots \hat{m}_{k_3} y_1 \ldots y_{k_4} $};
  \draw[thick, rounded corners=4pt] (-2.5,1.1) -- (4.5,1.1);
  \draw[thick, rounded corners=4pt] (-2.5,0.9) -- (-2.5,1.1);
  \draw[thick, rounded corners=4pt] (-0.3,0.9) -- (-0.3,1.1);
  \draw[thick, rounded corners=4pt] (1.2,0.9) -- (1.2,1.1);
  \draw[thick, rounded corners=4pt] (2.8,0.9) -- (2.8,1.1);
  \draw[thick, rounded corners=4pt] (4.5,0.9) -- (4.5,1.1);
  \node at (-1.3,1.3) {$\Delta_1$};
  \node at (0.5,1.3) {$\Delta_2$};
  \node at (2,1.3) {$\Delta_3$};
  \node at (3.6,1.3) {$\Delta_4$};
\node[anchor=base] at (-5, -0.9)
{Knot after mutation}; 
 \node[anchor=base] at (1,-1)
                {$ \hat{m}_1 \ldots \hat{m}_{k_3}  x_1 \ldots x_{k_2}  m_1  \ldots m_{k_1}  y_1 \ldots y_{k_4} $};
  \draw[thick, rounded corners=4pt] (-2.3,-0.6) -- (4.3,-0.6);
  \draw[thick, rounded corners=4pt] (-2.3,-0.7) -- (-2.3,-0.6);
  \draw[thick, rounded corners=4pt] (-0.5,-0.7) -- (-0.5,-0.6);
  \draw[thick, rounded corners=4pt] (0.9,-0.7) -- (0.9,-0.6);
  \draw[thick, rounded corners=4pt] (2.7,-0.7) -- (2.7,-0.6);
  \draw[thick, rounded corners=4pt] (4.3,-0.7) -- (4.3,-0.6);
  \node at (-1.3,-0.3) {$\bar{\Delta}_3$};
  \node at (0.5,-0.3) {$\bar{\Delta}_2$};
  \node at (2,-0.3) {$\bar{\Delta}_1$};
  \node at (3.6,-0.3) {$\bar{\Delta}_4$};
  \end{tikzpicture}
\end{center}
\caption{Gauss code with net changes in arcs labels denoted}
\end{figure}
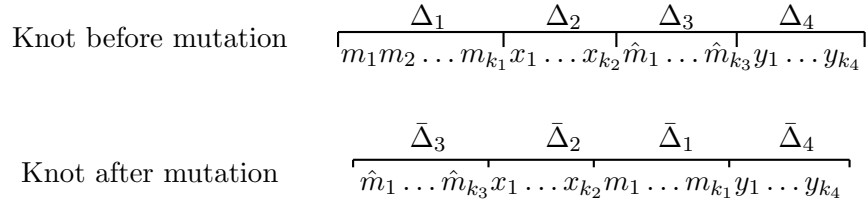

As an example, allow the ``knot before mutation" to be K1 (refer to Figure 24).  The net change of the left mutant block, $\Delta_1$, is -1 and the net change of the right mutant block, $\Delta_3$, is +1.

\begin{lemma} 
$\bar{\Delta}_1 = -\Delta_1$ , $\bar{\Delta}_3 = -\Delta_3$ , $\bar{\Delta}_2 = \Delta_2$ , $\bar{\Delta}_4 = \Delta_4$
\end{lemma}
\begin{proof}  Recall that this type of mutation exchanges the 2 mutant blocks, keeps the sign associated with each crossing the same, and switches the over/under information associated to each crossing.  Within the blocks denoted by $\bar{\Delta}_1$ and $\bar{\Delta}_3$, they differ from $\Delta_1$ and $\Delta_3$, respectively, only by the over/under information associated to each crossing being switched.  For a particular crossing, c, the information associated to c changes as follows after mutation:\\

\begin{center}
o+ : -1 $\longleftrightarrow$ u+ : +1\\
o - : +1 $\longleftrightarrow$ u - : -1\\
\end{center}

Each crossing switches its contribution to the arc labels from +1 to -1 (or vice versa).  When calculating the contribution to the net change in arc labels for the block we get $\bar{\Delta}_1 = -\Delta_1$ and $\bar{\Delta}_3 = -\Delta_3$.  Since the blocks sectioned off by $\Delta_2$ and $\Delta_4$ remain unchanged after a mutation, it is clear the the net difference of the arc labels from those blocks remains unchanged.
\end{proof}

\begin{lemma}
$\Delta_1 = -\Delta_3$
\end{lemma}

\begin{proof}
Recall that crossings in $\Delta_i$ can be categorized as homebody or traveler.  WLOG assume the first occurence of crossing c in the Gauss code is in the left mutant bock, and c is a positive overcrossing ($c_{o+}$).  If c is a homebody crossing, then both occurences of c, $c_{o+}$ and $c_{u+}$, occur in the left mutant block.  $c_{o+}$ contributes -1 to $\Delta_1$ and $c_{u+}$ contributes +1 to $\Delta_1$.  Therefore if c is a homebody crossing, it has a net contribution of 0 to $\Delta_1$.  If c is a traveler crossing, then $c_{o+}$ contributes -1 to $\Delta_1$, and $c_{u+}$ contributes +1 to $\Delta_3$.  Therefore $\Delta_1$ = -$\Delta_3$
\end{proof}

By applying the same argument as above we obtain the following corollary.

\begin{corollary}
$\Delta_2$ = -$\Delta_4$
\end{corollary}

By applying the lemmas above, we can consider the Gauss code in Figure 35 during the proof of the following lemmas.\\

\begin{figure}[h]
\begin{center}
\begin{tikzpicture}
\node[anchor=base] at (-5, 0.9)
{Knot before mutation};  
  \node[anchor=base] at (1,0.7)
                {$m_1 m_2 \ldots m_{k_1} x_1 \ldots x_{k_2} \hat{m}_1 \ldots \hat{m}_{k_3} y_1 \ldots y_{k_4} $};
  \draw[thick, rounded corners=4pt] (-2.5,1.1) -- (4.5,1.1);
  \draw[thick, rounded corners=4pt] (-2.5,0.9) -- (-2.5,1.1);
  \draw[thick, rounded corners=4pt] (-0.3,0.9) -- (-0.3,1.1);
  \draw[thick, rounded corners=4pt] (1.2,0.9) -- (1.2,1.1);
  \draw[thick, rounded corners=4pt] (2.8,0.9) -- (2.8,1.1);
  \draw[thick, rounded corners=4pt] (4.5,0.9) -- (4.5,1.1);
  \node at (-1.3,1.3) {$\Delta_1$};
  \node at (0.5,1.3) {$\Delta_2$};
  \node at (2,1.3) {$-\Delta_1$};
  \node at (3.6,1.3) {$-\Delta_2$};
\node[anchor=base] at (-5, -0.9)
{Knot after mutation};  
 \node[anchor=base] at (1,-1)
                {$ \hat{m}_1 \ldots \hat{m}_{k_3}  x_1 \ldots x_{k_2}  m_1  \ldots m_{k_1}  y_1 \ldots y_{k_4} $};
  \draw[thick, rounded corners=4pt] (-2.3,-0.6) -- (4.3,-0.6);
  \draw[thick, rounded corners=4pt] (-2.3,-0.7) -- (-2.3,-0.6);
  \draw[thick, rounded corners=4pt] (-0.5,-0.7) -- (-0.5,-0.6);
  \draw[thick, rounded corners=4pt] (0.9,-0.7) -- (0.9,-0.6);
  \draw[thick, rounded corners=4pt] (2.7,-0.7) -- (2.7,-0.6);
  \draw[thick, rounded corners=4pt] (4.3,-0.7) -- (4.3,-0.6);
  \node at (-1.3,-0.3) {$\Delta_1$};
  \node at (0.5,-0.3) {$\Delta_2$};
  \node at (2,-0.3) {$-\Delta_1$};
  \node at (3.6,-0.3) {$-\Delta_2$};
  \end{tikzpicture}
\end{center}
\caption{Gauss code with net changes in arc labels denoted}
\end{figure}

\begin{lemma}
If c is a crossing outside the mutant blocks, W(c) remains unchanged after mutation.
\end{lemma}

\begin{proof}
If c is a crossing outside the mutant blocks, c is contributing to $\Delta_2$ or $-\Delta_2$.  W(c) remains unchanged as a result of mutation leaving the net change of the left mutant block (denoted by $\Delta_1$) and the right mutant block (denoted by $-\Delta_1$)  the same.
\end{proof}

\begin{lemma}
If c is a homebody crossing in a mutant block, W(c) remains unchanged after mutation.
\end{lemma}

\begin{proof}
WLOG we may assume that both occurences of c occur in the left mutant block in the Gauss code before mutation, thus appearing in the right mutant block after mutation.  WLOG we may also assume that we first encounter c as a positive overcrossing ($c_{o+}$).  Consider the net changes to the arc labels after mutation drawn in Figure 36.\\

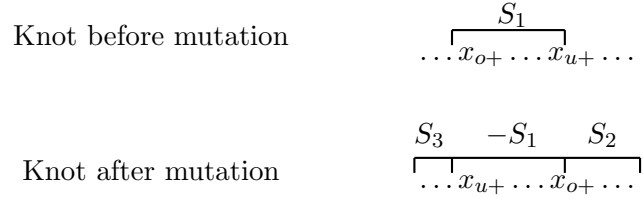
\begin{figure}[h]
\begin{center}
\begin{tikzpicture}
\node[anchor=base] at (-4, 0.9)
{Knot before mutation};  
  \node[anchor=base] at (1,0.7)
                {$\ldots x_{o+} \ldots x_{u+} \ldots $};
  \draw[thick, rounded corners=4pt] (0,1.1) -- (1.5,1.1);
  \draw[thick, rounded corners=4pt] (0,0.9) -- (0,1.1);
  \draw[thick, rounded corners=4pt] (1.5,0.9) -- (1.5,1.1);
  \node at (0.8,1.3) {$S_1$};
  \node[anchor=base] at (-4, -0.9)
{Knot after mutation};  
 \node[anchor=base] at (1,-1)
                {$\ldots x_{u+} \ldots x_{o+} \ldots $};
  \draw[thick, rounded corners=4pt] (-0.5,-0.6) -- (2.5,-0.6);
  \draw[thick, rounded corners=4pt] (0,-0.8) -- (0,-0.6);
  \draw[thick, rounded corners=4pt] (1.5,-0.8) -- (1.5,-0.6);
  \draw[thick, rounded corners=4pt] (-0.5,-0.8) -- (-0.5,-0.6);
  \draw[thick, rounded corners=4pt] (2.5,-0.8) -- (2.5,-0.6);
  \node at (0.8,-0.3) {$-S_1$};
  \node at (2, -0.3) {$S_2$};
  \node at (-0.3, -0.3) {$S_3$};

\end{tikzpicture}
\end{center}
\caption{Gauss Code for Homebody Crossing in a Mutant Block}
\end{figure}

Recall that W(c) is calculated as the ``outside differences" or ``inside differences" from the overcrossing to the undercrossing.  Before mutation, we calculate W(c) = $S_1$ using outside differences.   After mutation, W(c) = $S_2 + S_3$ if calculated as the ``inside differences" from overcrossing to undercrossing.  WLOG we may assume the arc before crossing $x_{u+}$ is labeled 0.\\
\[
-S_1 + S_2 + S_3 = 0
\]
\[
S_2 + S_3 = S_1
\]
\[
W(c) = S_1
\]
Alternatively, we could calculate W(c) in the reverse direction, by reading undercrossing to overcrossing.  Then, using outside differences we see W(c) = - (-$S_1$) = $S_1$.
\end{proof}

\begin{lemma}
If c is a traveler crossing in a mutant block, W(c) remains unchanged after mutation.
\end{lemma}

\begin{proof}
WLOG we may assume c is positive and we first encounter c as an overcrossing ($c_{o+}$) in the left mutant block.  Thus the first instance of c in the left mutant block, after mutation, is still an overcrossing.  Consider the net changes to the arc labels in Figure 37.\\

\begin{figure}[h]
\begin{center}
\begin{tikzpicture}
\node[anchor=base] at (-6, 0.7)
{Knot before mutation};  
  \node[anchor=base] at (1,0.7)
                {$m_1 \ldots c_{o+} \ldots m_{k_1} x_1 \ldots x_{k_2} \hat{m}_1 \ldots c_{u+} \ldots \hat{m}_{k_3} y_1 \ldots y_{k_4} $};
  \draw[thick, rounded corners=4pt] (-3.5,2) -- (5.5,2);
\draw[thick, rounded corners=4pt] (-3.5,1.3) -- (-0.6,1.3);
\draw[thick, rounded corners=4pt] (0.9,1.3) -- (3.7,1.3);
  \draw[thick, rounded corners=4pt] (-2.4,0.9) -- (-2.4,1.3);
  \draw[thick, rounded corners=4pt] (2.3,0.9) -- (2.3,1.3);
  \draw[thick, rounded corners=4pt] (-3.5,0.9) -- (-3.5,2);
  \draw[thick, rounded corners=4pt] (-0.6,0.9) -- (-0.6,2);
  \draw[thick, rounded corners=4pt] (0.9,0.9) -- (0.9,2);
  \draw[thick, rounded corners=4pt] (3.7,0.9) -- (3.7,2);
  \draw[thick, rounded corners=4pt] (5.5,0.9) -- (5.5,2);
  \node at (-2,2.3) {$\Delta_1$};
  \node at (0.3,2.3) {$\Delta_2$};
  \node at (2.4,2.3) {$-\Delta_1$};
  \node at (4.6,2.3) {$-\Delta_2$};
  \node at (-3,1.6) {$S_{11}$};
  \node at (-1.5,1.6) {$S_{12}$};
  \node at (1.6,1.6) {$S_{21}$};
  \node at (3,1.6) {$S_{22}$};
\node[anchor=base] at (-6, -2)
{Knot after mutation};  
 \node[anchor=base] at (1,-1.9)
                {$ \hat{m}_1 \ldots c_{o+} \ldots \hat{m}_{k_3}  x_1 \ldots x_{k_2}  m_1  \ldots c_{u+} \ldots m_{k_1}  y_1 \ldots y_{k_4} $};
  \draw[thick, rounded corners=4pt] (-3.5,-.6) -- (5.5,-.6);
\draw[thick, rounded corners=4pt] (-3.5,-1.3) -- (-0.6,-1.3);
\draw[thick, rounded corners=4pt] (0.9,-1.3) -- (3.7,-1.3);
  \draw[thick, rounded corners=4pt] (-2,-1.7) -- (-2,-1.3);
  \draw[thick, rounded corners=4pt] (2,-1.7) -- (2,-1.3);
  \draw[thick, rounded corners=4pt] (-3.5,-1.8) -- (-3.5,-.6);
  \draw[thick, rounded corners=4pt] (-0.6,-1.8) -- (-0.6,-0.6);
  \draw[thick, rounded corners=4pt] (0.9,-1.8) -- (0.9,-0.6);
  \draw[thick, rounded corners=4pt] (3.7,-1.8) -- (3.7,-0.6);
  \draw[thick, rounded corners=4pt] (5.5,-1.8) -- (5.5,-0.6);
  \node at (-2,-0.3) {$\Delta_1$};
  \node at (0.3,-0.3) {$\Delta_2$};
  \node at (2.4,-0.3) {$-\Delta_1$};
  \node at (4.6,-0.3) {$-\Delta_2$};
  \node at (-3,-1) {$-S_{21}$};
  \node at (-1.5,-1) {$-S_{22}$};
  \node at (1.6,-1) {$-S_{11}$};
  \node at (3,-1) {$-S_{12}$};
  \end{tikzpicture}
\end{center}
\caption{Gauss Code with net changes in Cheng Weights labeled}
\end{figure}
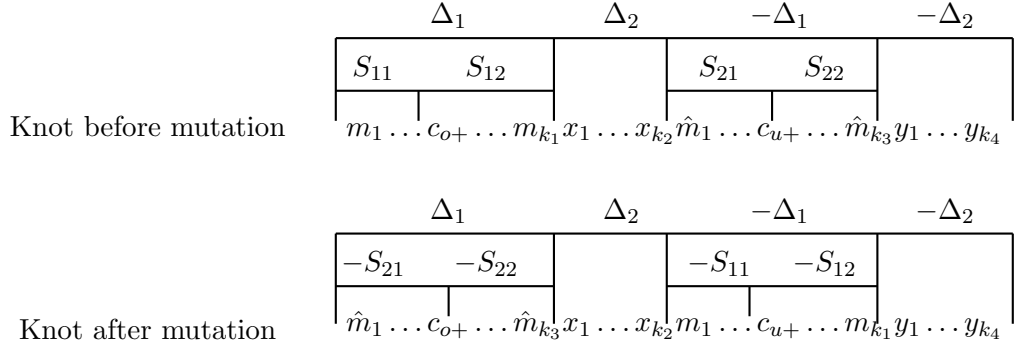
Before mutation, W(c) = 0 - $(S_{12} + \Delta_2 + S_{21}) = -S_{12} - \Delta_2 - S_{21}$ when calculated with outside differences from overcrossing to undercrossing.\\
After mutation, when reading left to right and calculating inside differences for W(c) we obtain
\[
W(c) = 0 - (-S_{22} + \Delta_2 + S_{11})
\] 
\[= +S_{22} - \Delta_2 - S_{11}
\]
With this calculation, it is not immediately clear that W(c) would remain unchanged after mutation.  Recall that alternatively we could calculate outside differences for W(c) in the reverse direction (by reading undercrossing to overcrossing) and take the opposite sign of the calculation.
\[
W(c) = - (0 - (-S_{12} - \Delta_2 - S_{21}))
\]
\[
= -(S_{12} + \Delta_2 + S_{21})
\]
\[
= -S_{12} - \Delta_2 - S_{21}
\]
\end{proof}

Thus for each possible category of crossing, we have shown that after mutation by positive reflection W(c) remains unchanged for all crossings c.  Therefore the Affine Index Polynomial remains unchanged and this type of mutation is not detected.
\end{proof}

\end{document}